\documentclass[12pt,reqno]{amsart}
\usepackage{a4wide}
 \usepackage[misc]{ifsym} 
\numberwithin{equation}{section}

\usepackage{amsmath}

\usepackage{color}
\usepackage{txfonts}
\usepackage{amsfonts}
\usepackage{amsmath,amsthm,amssymb,amscd}
\usepackage{latexsym}
\usepackage{hyperref}
\usepackage[numbers,sort&compress]{natbib}
\usepackage{hypernat}
\allowdisplaybreaks[4]
\numberwithin{equation}{section}

\newtheorem{theorem}{Theorem}[section]
\newtheorem{proposition}[theorem]{Proposition}

\newtheorem{lemma}[theorem]{Lemma}

\theoremstyle{definition}

\newcommand{\va}{\varepsilon}

\def\r{\mathbb{R}}

\begin{document}

\title[	Solutions for singularly perturbed Schr\"{o}dinger equations]
{Infinitely many new solutions for singularly perturbed Schr\"{o}dinger equations}

\author{ Benniao Li$^{1}$,\ \ Wei Long$^{1,\text{\Letter}}$,\ \  Jianfu Yang$^{1}$}



\thanks{ $^{1}$School of Mathematics
and Statistics, Jiangxi Normal University, Nanchang,
Jiangxi 330022, P. R. China }

\thanks{benniao\_li@jxnu.edu.cn(B. Li)}
\thanks{ lwhope@jxnu.edu.cn(\Letter W. Long)}
\thanks{jfyang200749@sina.com(J. Yang)}

\begin{abstract}

This paper deals with the existence of solutions for the following perturbed Schr\"{o}dinger equation
\begin{equation*}
-\varepsilon^{2} \Delta u + V(x)u= |u|^{p-2}u, \, \, \text{ in } \, \, \r^{N}, 
\end{equation*}
where $\varepsilon$ is a parameter, $N \geq 3$, $p \in (2, \frac{2N}{N-2})$, and $V(x)$ is a potential function in $\r^{N}$. We demonstrate an interesting ``dichotomy'' phenomenon for concentrating solutions of the above Schr\"{o}dinger equation. More specifically, we construct infinitely many new solutions with peaks locating both in the bounded domain and near infinity, which fulfills the profile of the concentration compactness. Moreover, this approach can be extended to solve other related problems.

 {\bf Key words}: Schr\"{o}dinger equations; non-degeneracy; dichotomy.

{\bf AMS Subject Classifications:} 35J50, 35J60
\end{abstract}

\maketitle

\section{introduction}

In this paper, we consider the following nonlinear Schr\"odinger problem
\begin{equation}\label{main}
-\varepsilon^{2} \Delta u + V(x)u= |u|^{p-2}u, \, \, \text{ in } \, \, \r^{N},
\end{equation}
where $\varepsilon$ is a parameter, $N\geq 3$, $p \in (2, \frac{2N}{N-2})$. Problem \eqref{main} arises in the study of standing-wave solutions for the time-dependent Schr\"odinger equation
\begin{equation}\label{main1a} i \varepsilon \frac{\partial \psi}{\partial t} = - \varepsilon^{2} \Delta \psi + \bigl( V(x) +E \bigr) \psi - |\psi|^{p-2}\psi,  (x, t) \in \r^{N} \times \r_{+}.
\end{equation}
A standing-wave solution of  problem \eqref{main1a} is a solution of the form \[\psi(x, t) = exp(-{ i E t}/{\varepsilon})u(x),\] where $i$ is the imaginary unit, $\varepsilon$ is the Planck constant,  and then $u(x)$ is a solution of \eqref{main}. Problem \eqref{main1a} appears in nonlinear optics and quantum physics. It describes the transition from quantum to classical mechanics as $\varepsilon \to 0$, which attracts a lot of researches.

 When $\varepsilon $ is fixed, we may assume $\varepsilon=1$, the following problem
 \begin{equation}\label{main1}
- \Delta u + V(x)u= |u|^{p-2}u, \, \, \text{ in } \, \, \r^{N}
\end{equation}
has been widely studied in recent decades. By the variational method, Ding-Ni proved in \cite{DN} that the problem \eqref{main1} has a nontrivial radial solution if $V(x)$ is radial. While for non-radial $V(x)$, it was shown in \cite{Ra} the existence of the ground state solution for \eqref{main1} provided that
 \[\lim\inf_{|x| \to \infty } V(x) > \inf_{x\in \r^{N}} V(x) \]
by the variational method and the concentration-compactness principle. In \cite{CDS}, Cerami et al obtained infinitely many solutions for the problem \eqref{main1}. Assuming $V(x)$ satisfies certain conditions, they proved, among other things, by the concentration compactness principle \cite{Li1, Li2}  that
 a Palais-Smale sequence $\{u_{n}\}$ can be decomposed as
\[u_{n} = u_{0} + \sum_{i=1}^{\kappa} {U}(\cdot- x_{n}^{i}) + o(1)\]
where $u_{n} $ weakly converges to $u_{0}$, $|x_{n}^{i} - x_{n}^{j}| \to +\infty$ and $|x_{n}^{i}| \to +\infty$ if $i \neq j$.
In order to verify that $\{u_n\}$ satisfies the Palais-Smale condition, one generally kills bumps at infinity, which implies the sequence is strongly convergent.  Solutions obtained in this way vanish at infinity, see for instance \cite{Ba1, C, Ra}.

 When $\varepsilon$ is sufficiently small, Wang showed in \cite{w} that the least energy solutions for \eqref{main} concentrate near the global minima of $V(x)$ as $\varepsilon \to 0$, see \cite{wz} for general cases. On the other hand, a solution of \eqref{main} concentrating on non-degenerate critical points of $V(x)$ was first constructed in \cite{FW} by the Lyapunov-Schmidt reduction method for sufficiently small $\varepsilon >0$ and $N=1$. Later on, by the same approach, Oh \cite{o1, o2} extended  the result to higher-dimensional cases with subcritical nonlinearity. Since then, the reduction method has become a powerful device for constructing solutions. In particular,   multi-peak solutions for \eqref{main}  are constructed in \cite{df1} if $V(x)$ is locally H\"older continuous in $\r^{N}$ and there exist $k$ disjoint bounded regions $\Omega_{i} \,(\,i =1, 2, \cdots, k)$ in $\r^{N}$ such that
 \[ \inf_{x\in \partial \Omega_{i}} V(x) > \inf_{x\in \Omega_{i}} V(x) > 0. \]
 Further results can be found in \cite{CNY, df, L}. From these results, we see that critical points of $V(x)$ located in bounded domains play an important role in constructing solutions concentrated in bounded domains.

 In contrast to works in \cite{CNY, df, df1, L}, it was considered in  \cite{WY} the existence of infinitely many solutions concentrating near infinity. Suppose that $V$ satisfies
 \begin{equation}\label{main1b}
 V(|x|) =V(r) = V_{\infty} + \frac{a}{r^{\alpha}} +O(r^{-\alpha - \delta}), r \to \infty
 \end{equation}
with $a, \delta> 0$ and $\alpha >1$. In order to construct infinitely many solutions concentrating near infinity,  a new method was developed in  \cite{WY}, where the number of peaks is treated as the parameter.
Condition \eqref{main1b} can be regarded as the critical points of $V(x)$ near infinity. Results for non-radial functions $V(x)$ can be found in \cite{DWY}.

In previous works,  peaks of solutions for \eqref{main} are either located in bounded domains or near infinity.  In this paper, we are interested in construct solutions of \eqref{main} concentrating both on a bounded domain and near infinity. Inspired by \cite{GMPY} such solutions will be constructed by assembling two type solutions, one concentrating in bounded domain and the other at infinity.

We assume in this paper that $V(x)$ satisfies the following conditions.\\
 $(V_1).$ $V(x)$ is radially symmetric with respect to $x_{0} \in \mathbb{R}^{N}$ and $x_{0}$ is a non-degenerate critical point of $V(x)$;\\
$(V_2^\pm).$  $V(|x-x_{0}|)=V(r) = V_{\infty} \pm \frac{a}{r^{m}} + O(r^{-(m + \delta)})$ as $r \to +\infty$ with $m> 1$ .\\

Without loss of generality, we may assume that $x_{0} = 0$ and $V_{\infty} = 1$ in the rest paper. It is well-known that  the problem
\begin{equation}\label{eq:1.3}
\left\{\begin{array}{lll} & -\Delta u + u = u^{p-1}, u > 0,\\
&  u(0) = \max_{x \in \r^N} u(x).
\end{array}
\right.
\end{equation}
has a unique positive solution $U(x)$, which  is, by \cite{GNN, K},  radially symmetric, $U'(r)< 0$ and ${U}$ satisfies
\[\lim_{r\to \infty} {U}(r) e^{r}r^{\frac{N-1}{2}} = C > 0, \, \, \, \lim_{r \to \infty } \frac{{U}^{'}}{{U}} = -1.\]
Moreover, ${U}$ is non-degenerate, that is,  the kernel of $-\Delta \varphi + \varphi - (p-1) {U}^{p-2}\varphi $ equals to
\[span\Big\{\frac{\partial {U}}{\partial x_{i}}, i = 1, 2, \cdots, N\Bigr\}.
\]
 Denote $U_{\varepsilon}(x) = {U}(x/\varepsilon)$, then $U_{\varepsilon}(x)$ is a solution of the problem
\[-\varepsilon^{2} \Delta u +  u = |u|^{p-2}u, u > 0 \, \, u(0) = \max _{x \in \r^{N} } u(x),\]
and  $U_{\varepsilon}$ is non-degenerate as well, i.e.  if
\begin{equation}\label{non}
-\varepsilon^{2} \Delta\varphi + \varphi  - (p-1)U_{\varepsilon}^{p-2} \varphi = 0, \end{equation}
then
\[\varphi = \sum_{i=1}^{N} a_{i} \frac{\partial U_{\varepsilon}}{\partial x_{i}},\]
for some constants $a_{i}$, $i =1, 2, \cdots, N$.

At first, through the result from \cite{o1, o2} and the moving plane method, we can get the following theorem directly:
\begin{theorem}\label{thm0} Assume that $V(x)$ satisfies 	Condition $(V_{1})$. Then there exists $\varepsilon_{0} > 0$, such that for any $\varepsilon \in (0, \varepsilon_{0})$, there is a radial solution $u_{\varepsilon}$ of \eqref{main} in the form
\[u_{\varepsilon}(x) = \Bigl( V(0)\Bigr)^{\frac{1}{p-2}} {U}\bigl(\frac{\sqrt{V(0)}}{\varepsilon} x\bigr) + \omega_{\varepsilon }(x) \]
with the radial error term $\|\omega_{\varepsilon }(x)\|_{H^{1}(\mathbb{R}^{N})}= o(\varepsilon^{\frac{N}{2}}) $.
\end{theorem}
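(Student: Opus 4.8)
The plan is to treat \eqref{main} as a singular perturbation of an autonomous ground-state equation and to solve the resulting problem by a Lyapunov--Schmidt reduction carried out directly in the radial class, so that the expected symmetry of $u_\varepsilon$ is built in from the start. Passing to the stretched variable $y=x/\varepsilon$ and writing $v(y)=u(\varepsilon y)$, equation \eqref{main} becomes
\begin{equation*}
-\Delta v + V(\varepsilon y)\,v = |v|^{p-2}v,\qquad y\in\r^N .
\end{equation*}
As $\varepsilon\to0$ the coefficient freezes at $V(0)$, and a direct computation shows that the unique positive solution of the limiting equation $-\Delta v + V(0)v=|v|^{p-2}v$ is $W_0(y):=(V(0))^{1/(p-2)}U(\sqrt{V(0)}\,y)$. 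Hence the natural approximate solution of \eqref{main} is $W(x):=W_0(x/\varepsilon)$, exactly the leading term of the asserted expansion.

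First I would estimate the residual of $W_0$. Since $W_0$ solves the frozen equation exactly, inserting it into the rescaled equation leaves $R:=(V(\varepsilon y)-V(0))\,W_0$. Because $0$ is a critical point of the $C^2$ radial function $V$, one has $V(\varepsilon y)-V(0)=O(\varepsilon^2|y|^2)$ on fixed balls, while for $|y|$ large the exponential decay of $W_0$ renders the contribution negligible; altogether $\|R\|=O(\varepsilon^2)$ in the energy norm. Next I would invert the linearized operator $L_0\phi=-\Delta\phi+V(0)\phi-(p-1)W_0^{p-2}\phi$. By the non-degeneracy of $U$ recalled in \eqref{non}, its kernel on all of $\r^N$ is $\mathrm{span}\{\partial_{y_i}W_0:i=1,\dots,N\}$; these generators are odd in the respective variables, hence \emph{not} radial. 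Since the essential spectrum of $L_0$ is $[V(0),\infty)$ with $V(0)>0$, the value $0$ can only be an eigenvalue, and its eigenspace is non-radial, so $L_0$ is an isomorphism of $H^1_{\mathrm{rad}}(\r^N)$ onto its dual. Writing $v=W_0+\phi$ and projecting onto the radial class turns the equation into the fixed-point problem $\phi=-L_0^{-1}\bigl(R+\mathcal N(\phi)\bigr)$, where $\mathcal N$ collects the superlinear terms together with the lower-order mismatch $(V(\varepsilon y)-V(0))\phi$; the contraction mapping principle then yields a radial $\phi_\varepsilon$ with $\|\phi_\varepsilon\|_{H^1(\r^N)}=O(\varepsilon^2)$. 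This is precisely the radial specialization of the construction of \cite{o1,o2}.

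Undoing the scaling with $\omega_\varepsilon(x):=\phi_\varepsilon(x/\varepsilon)$ produces a radial solution $u_\varepsilon=W+\omega_\varepsilon$ of \eqref{main}, and the identities $\|\omega_\varepsilon\|_{L^2}^2=\varepsilon^N\|\phi_\varepsilon\|_{L^2}^2$ and $\|\nabla\omega_\varepsilon\|_{L^2}^2=\varepsilon^{N-2}\|\nabla\phi_\varepsilon\|_{L^2}^2$ convert the $O(\varepsilon^2)$ bound into $\|\omega_\varepsilon\|_{H^1(\r^N)}=O(\varepsilon^{N/2+1})=o(\varepsilon^{N/2})$, the claimed error estimate. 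I note in passing that this radial route needs only the non-degeneracy of $U$ and $V(0)>0$: the full strength of Condition $(V_1)$, namely the non-degeneracy of $x_0=0$, becomes indispensable only if one instead invokes the general, not-a-priori-radial existence result of \cite{o1,o2}, where it is used to locate the concentration point among the critical points of $V$. Along that alternative route, $u_\varepsilon$ is a positive solution decaying exponentially at infinity, and the moving plane method, exploiting the radial symmetry of $V$ about $0$ and the closeness of $u_\varepsilon$ to the radial profile $W$, forces $u_\varepsilon$ to be radial about $0$ with $u_\varepsilon'<0$, which also pins its peak exactly at the origin.

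The step demanding the most care is this symmetrization. Unlike the autonomous case, $V(x)$ is genuinely $x$-dependent, so when comparing $u_\varepsilon$ with its reflection $u_\varepsilon^\lambda$ across a hyperplane $\{x_1=\lambda\}$, the difference solves a linear differential inequality whose zeroth-order coefficient carries the mismatch $V(x)-V(x^\lambda)$; this mismatch vanishes only for planes through the origin, so the argument must use both the radial symmetry of $V$ and the smallness of $\varepsilon$ to initiate the planes from infinity and to close them. In the radial-subspace route this difficulty disappears entirely, at the modest cost of verifying the uniform-in-$\varepsilon$ invertibility of $L_0$ on $H^1_{\mathrm{rad}}$ and the residual bound $\|R\|=O(\varepsilon^2)$; either way the remaining estimates are routine, which is why the conclusion follows ``directly'' from \cite{o1,o2} and the moving plane method.
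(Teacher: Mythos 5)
The paper itself disposes of Theorem \ref{thm0} purely by citation: existence and the asymptotic profile come from \cite{o1,o2} (concentration at the non-degenerate critical point), and radial symmetry from the moving plane method --- that is, exactly your ``alternative route''. Your primary route, performing the reduction inside $H^1_{\mathrm{rad}}$ from the outset, is a legitimate and arguably cleaner strategy, and most of its ingredients are correct: the identification of the limit profile $W_0=(V(0))^{1/(p-2)}U(\sqrt{V(0)}\,y)$, the scaling computation converting $\|\phi_\varepsilon\|_{H^1}=O(\varepsilon^2)$ into $\|\omega_\varepsilon\|_{H^1}=O(\varepsilon^{N/2+1})=o(\varepsilon^{N/2})$, the residual bound $\|(V(\varepsilon y)-V(0))W_0\|=O(\varepsilon^2)$ (where the exponential localization of $W_0$ does all the work), and the observation that $\ker L_0$ is spanned by the non-radial functions $\partial_{y_i}W_0$, so that $L_0$ is an isomorphism on the radial subspace.

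The genuine gap is in the contraction step. You invert the \emph{frozen} operator $L_0=-\Delta+V(0)-(p-1)W_0^{p-2}$ and place the mismatch $(V(\varepsilon y)-V(0))\phi$ into the error term $\mathcal{N}(\phi)$. Unlike the residual, this term is not localized by $W_0$: for $|y|$ of order $1/\varepsilon$ and beyond one has $V(\varepsilon y)-V(0)\to V_\infty-V(0)$, which is a fixed nonzero number in general, so $\sup_y|V(\varepsilon y)-V(0)|$ does not tend to $0$. Testing with a radial $\phi$ supported in $\{|y|>M/\varepsilon\}$ shows that the linear map $\phi\mapsto L_0^{-1}\bigl[(V(\varepsilon\cdot)-V(0))\phi\bigr]$ has operator norm that does not go to zero as $\varepsilon\to 0$; consequently $\phi\mapsto -L_0^{-1}(R+\mathcal{N}(\phi))$ is neither a contraction nor a self-map of a ball of radius $O(\varepsilon^2)$, and the fixed-point argument as written does not close. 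The standard repair is to keep the true potential in the linearization, i.e.\ to work with $L_\varepsilon\phi=-\Delta\phi+V(\varepsilon y)\phi-(p-1)W_0^{p-2}\phi$ and prove the uniform bound $\|L_\varepsilon\phi\|\geq\rho\|\phi\|$ on $H^1_{\mathrm{rad}}$ for small $\varepsilon$ by a contradiction/concentration argument: outside a large ball $W_0^{p-2}$ is negligible and $-\Delta+V$ is coercive (this uses $\inf V>0$, which the paper assumes implicitly throughout), while the bounded part of the mass is handled by the invertibility of $L_0$ on radial functions. This is precisely the shape of the argument the paper runs for $P_kL_k$ in Lemma \ref{lm1}. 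With that replacement your remaining estimates go through and yield the stated $o(\varepsilon^{N/2})$ error bound.
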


For any integer $ k> 0$, let
\[\xi_{j} = \bigl( r \cos \frac{2(j-1)\pi}{k}, r\sin\frac{2(j-1)\pi}{k}, {\bf {0}}\bigr) \in \r^{2} \times \r^{N-2}, j =1, 2, \cdots, k,\]
where ${\bf {0}}$ is the zero vector in $\r^{N-2}$. Denote
 \[ U_{\varepsilon, \xi_{j}}(x) = U_{\varepsilon}(x - \xi_{j}).\]

 Our main results are stated as follows.
\begin{theorem}\label{thm1} Let $u_{\varepsilon}$ be a solution in Theorem \ref{thm0} with a sufficiently small constant  $\varepsilon > 0$. Suppose  that $V(x)$ satisfies $(V_1)$ and $(V_2^+)$.  Then, there exists an integer $k_{\varepsilon }>0$ depending on $\varepsilon$, such that for  $k > k_{\varepsilon}$, \eqref{main} has infinitely many solutions of the form
\begin{equation}\label{form} u_{\varepsilon, k}(x) = u_{\varepsilon}(x) + \sum_{j=1}^{k} U_{\varepsilon, \xi_{j}}(x)+o_{k}(1)\end{equation}
with $o_{k}(1) \to 0 $ as $k \to \infty$, where
\[r_{k} \in S_{k} :=\Bigl[\bigl(\frac{ \varepsilon m}{2\pi } -\theta\bigr) k \ln k,\,\, \bigl(\frac{\varepsilon m}{2 \pi } +\theta\bigr) k \ln k\Bigr],  \text{ for small }  \theta> 0 .\]
\end{theorem}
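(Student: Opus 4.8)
The plan is to construct $u_{\varepsilon,k}$ by a finite-dimensional Lyapunov--Schmidt reduction in which the number of peaks $k$ plays the role of the large parameter (in the spirit of \cite{WY}) and the polygon radius $r$ is the single remaining reduction variable. First I would fix the ansatz
\[
W_{\varepsilon,k,r}=u_\varepsilon+\sum_{j=1}^{k}U_{\varepsilon,\xi_j},\qquad \xi_j=\xi_j(r),
\]
and work in the closed subspace $H_s\subset H^1(\mathbb{R}^N)$ of functions that are invariant under the $O(N-2)$-action on the last $N-2$ coordinates, even in $x_2$, and invariant under the planar rotation by $2\pi/k$. These symmetries are compatible with $(V_1)$ and with the vertex set $\{\xi_j\}$ (which is closed under both the cyclic rotation and the reflection $x_2\mapsto -x_2$), and their purpose is twofold: they guarantee $W_{\varepsilon,k,r}\in H_s$, and they collapse the $Nk$ translational near-kernel directions of the $k$ outer bumps to the single dilation direction $Z:=\partial_r W_{\varepsilon,k,r}$. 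Because $u_\varepsilon$ is already an exact radial solution of \eqref{main} by Theorem \ref{thm0}, and $0$ is a non-degenerate critical point of $V$, the central bump contributes no kernel inside $H_s$.

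Next I would develop the linear theory. Writing $u=W_{\varepsilon,k,r}+\phi$ with $\phi\in H_s$ orthogonal to $Z$, I linearize \eqref{main} at $W_{\varepsilon,k,r}$ and must show the resulting operator $L_{\varepsilon,k}$ is invertible on $Z^\perp\cap H_s$ with inverse bounded uniformly in $k$ (up to an admissible polynomial loss). The crucial input is the non-degeneracy recalled in \eqref{non}: modulo the symmetry reduction and the exponentially small overlaps between neighbouring bumps, $L_{\varepsilon,k}$ is a small perturbation of $k+1$ decoupled non-degenerate linearizations, so a standard localization/blow-up argument yields invertibility. Then, for each $r\in S_k$, a contraction-mapping argument solves the auxiliary equation for a correction $\phi=\phi_{\varepsilon,k,r}\in H_s$. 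Its size is governed by the error $E_{\varepsilon,k,r}=-\varepsilon^2\Delta W+VW-|W|^{p-2}W$, whose three sources are (i) the potential deviation $(V-1)$ felt by the outer bumps, of order $a/r^m$; (ii) the attractive interaction between adjacent bumps, of order $e^{-2\pi r/(\varepsilon k)}$; and (iii) the coupling between $u_\varepsilon$ and the outer bumps, which is $O(e^{-cr/\varepsilon})$ and hence negligible on $S_k$. This gives $\|\phi_{\varepsilon,k,r}\|\le C\|E_{\varepsilon,k,r}\|=o_k(1)$ uniformly for $r\in S_k$, already producing the profile \eqref{form}.

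It then remains to solve the one-dimensional bifurcation equation, i.e.\ to find a critical point of the reduced energy $\mathcal E(r)=I_\varepsilon\big(W_{\varepsilon,k,r}+\phi_{\varepsilon,k,r}\big)$, where $I_\varepsilon$ is the energy functional of \eqref{main}. Isolating the $r$-dependent part, the two competing main terms are
\[
\mathcal E(r)=c_0+k\Big[\,A\,\frac{a}{r^m}-B\,e^{-2\pi r/(\varepsilon k)}\,\Big]\big(1+o_k(1)\big),\qquad A,B>0,
\]
since under $(V_2^+)$ the potential term $+A\,a/r^m$ penalizes small $r$ and pushes the peaks outward, while the attractive nearest-neighbour term $-B\,e^{-2\pi r/(\varepsilon k)}$ (all farther pair interactions and the central coupling being strictly lower order) favours small $r$. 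Differentiating, the balance $mAa/r^{m+1}\sim B(2\pi/\varepsilon k)e^{-2\pi r/(\varepsilon k)}$ forces $2\pi r/(\varepsilon k)=m\ln k+O(\ln\ln k)$, i.e.\ $r_k=\tfrac{\varepsilon m}{2\pi}k\ln k\,(1+o(1))$, which lies strictly inside $S_k$ for every $\theta>0$ once $k$ is large. Concretely I would maximize $\mathcal E$ over the compact interval $S_k$ and check, using the explicit rates above, that the derivative of the main part is positive at the left endpoint and negative at the right endpoint; since the $o_k(1)$ remainder and its $r$-derivative are controlled by the contraction step, the maximizer $r_k$ is interior and yields a genuine critical point, hence a solution $u_{\varepsilon,k}=W_{\varepsilon,k,r_k}+\phi_{\varepsilon,k,r_k}$. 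Letting $k$ range over all large integers produces infinitely many solutions.

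The main obstacle I anticipate is the interplay, on the whole interval $S_k$, between the two leading terms and every error: one must prove the uniform-in-$k$ invertibility of $L_{\varepsilon,k}$ in $H_s$, sum and dominate all $\binom{k}{2}$ pairwise interactions by the nearest-neighbour one, verify that the central/peripheral coupling is exponentially smaller than $a/r^m$, and—most delicately—show that the correction $\phi_{\varepsilon,k,r}$ and its $r$-derivative perturb the reduced energy by strictly less than the gap between its interior maximum and its endpoint values. Keeping these estimates sharp enough that the delicate cancellation producing $r_k$ survives is where the real work lies.
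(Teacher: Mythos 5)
Your proposal follows essentially the same route as the paper: the same ansatz $u_\varepsilon+\sum_j U_{\varepsilon,\xi_j}$ in the same symmetry class, invertibility of the projected linearized operator via the non-degeneracy of $U_\varepsilon$ and of $u_\varepsilon$ (the paper's Lemma \ref{lm1}, proved by the localization/blow-up contradiction you describe), a contraction mapping for the correction, and a reduced energy whose leading competition $\frac{a\beta}{r^m}$ versus $(r/k)^{-\frac{N-1}{2}}e^{-2\pi r/(\varepsilon k)}$ produces an interior maximizer $r_k\sim\frac{\varepsilon m}{2\pi}k\ln k$ in $S_k$ (Proposition \ref{pp2} and the endpoint comparison in the paper's proof of Theorem \ref{thm1}). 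The balance equation, the location of $r_k$, and the endpoint derivative signs you give all match the paper's computation, so this is the same proof in outline.
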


We remark that solutions of \eqref{main} given by \eqref{form} are dichotomy in the sense that  local maximal points of  $u_{\varepsilon}$ locate in a bounded domain, while that of
$U_{\varepsilon, \xi_{j}}(x)$ moves to infinity.

The proof of Theorem \ref{thm1} is to use the developed Lyapunov-Schmidt reduction method to construct solutions in the form
\begin{equation}\label{s}u_{\varepsilon,k} \approx u_{\varepsilon } + \sum_{j=1}^{k} U_{\varepsilon, \xi_{j}}(x). \end{equation}
We use $u_{\varepsilon}$ as a part of approximate solutions, which concentrates near the critical point of $V(x)$. The non-degeneracy of the linearized operator at $u_{\varepsilon}$ corresponding to  \eqref{main} is shown in \cite{T}.  Once $u_{\varepsilon}$ is  chosen, then the parameter $\varepsilon$ is fixed.  There is no more parameter as  the perturbation in \eqref{main}. We will use the number  of peaks $k$ as the parameter to construct the multi-peak solutions at infinity. A key point  in the estimate of the energy functional, among other things, is to find the balance between the term $\sum_{j=1}^{k} \int_{\mathbb{R}^{N}}(V(x)-1)U^{2}_{\varepsilon, \xi_{j}}dx$ arising from the potential function $V(x)$ and $\sum_{i\neq j} \int_{\mathbb{R}^{N}}U^{p}_{\varepsilon, \xi_{j}}U_{\varepsilon, \xi_{i}}dx$ coming from the interaction among the peaks. On the other hand, as we may see  in
 \cite{WY, WY1} that the number  of peaks $k \to \infty$ causes new difficulties since there exists the interaction among infinitely many peaks, while the concentrating location is dependent on the number of peaks, which is quite different from the problem with fixed number of peaks. Hence, the estimates between each of the two peaks should be more precise so that the infinite sum of interactions makes the proof work. Furthermore, although the process of the reduction method is standard, it is complicated to prove the invertibility of the linearized operator at the approximate solutions $u_{\varepsilon} + \sum_{j=1}^{k} U_{\varepsilon, \xi_{j}}(x)$ with two different types of concentrations.

If $V(x)$ satisfies $(V_1)$ and $(V_2^{-})$, by the same argument, we can construct infinitely many sign-changing solutions  for \eqref{main} in the form
\[u_{\varepsilon, k}(x) = u_{\varepsilon}(x) - \sum_{j=1}^{k} U_{\varepsilon, \xi_{j}}(x)+o_{k}(1).\]
Such a solution has positive peaks located in a bounded domain, and negative ones moving to infinity. Moreover, in this case, we can construct another type  sign-changing multi-bump solutions for \eqref{main},  which, besides peaks located in a bounded domain, has $k$ positive local maxima  and $k$ negative local minima moving to infinity.
Precisely,  for any integer $k$, let
\[\tilde \xi_{j} = \bigl(  r \cos \frac{(j-1)\pi}{k},  r\sin\frac{(j-1)\pi}{k}, {\bf {0}}\bigr) \in \r^{2} \times \r^{N-2}, j =1, 2, \cdots, 2k,\]
where ${\bf {0}}$ is the zero vector in $\r^{N-2}$. We have the following result.

\begin{theorem}\label{thm2} Assume that $V(x)$ satisfies $(V_1)$ and $(V_2^{-})$. Let $u_{\varepsilon}$ be a solution Theorem \ref{thm0} with sufficiently constant small $\varepsilon> 0$. Then, there exists an integer $\tilde k_{\varepsilon} > 0$ depending on $\varepsilon$ such that for $k > \tilde k_{\varepsilon }$, \eqref{main} has infinitely many sign-changing solutions of the form
\begin{equation}\label{form a}
u_{\varepsilon, k}(x) = u_{\varepsilon}(x) + \sum_{j=1}^{2k}(-1)^{j} U_{\varepsilon, \tilde \xi_{j}}(x)+\tilde{o}_{k}(1)
\end{equation}
with $\tilde{o}_{k}(1) \to 0 $ as $k \to \infty$, where
\[\tilde{r}_{k} \in \tilde{S}_{k} :=\Bigl[\bigl(\frac{\varepsilon m}{ \pi } -\theta\bigr) k \ln k,\,\, \bigl(\frac{\varepsilon m}{ \pi }+\theta\bigr) k \ln k\Bigr],  \text{ for small }  \theta> 0 .
\]
\end{theorem}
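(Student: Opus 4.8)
The plan is to prove Theorem \ref{thm2} by adapting, with the sign-changing ansatz, the Lyapunov--Schmidt reduction that underlies Theorem \ref{thm1}. First I would fix the approximate solution
\[
W_{\varepsilon,k}=u_{\varepsilon}+\sum_{j=1}^{2k}(-1)^{j}U_{\varepsilon,\tilde\xi_{j}},
\]
treating the common radius $\tilde r=\tilde r_{k}\in\tilde S_{k}$ as the only free parameter, and look for a genuine solution $u=W_{\varepsilon,k}+\phi$. The key structural observation is that the $2k$ alternating peaks are invariant under the rotation by $2\pi/k$ in the $x_{1}x_{2}$-plane (which sends peak $j$ to peak $j+2$ and preserves its sign) and under the reflection fixing the $x_{1}$-axis, while $u_{\varepsilon}$ is radial and hence invariant under the same group. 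Carrying out the whole construction in the corresponding symmetric Sobolev space collapses the $2kN$-dimensional translational near-kernel of $\sum_{j}(-1)^{j}U_{\varepsilon,\tilde\xi_{j}}$ to the single direction $\partial_{\tilde r}W_{\varepsilon,k}$; the non-degeneracy of $u_{\varepsilon}$ from \cite{T} together with that of $U_{\varepsilon}$ expressed in \eqref{non} guarantees there are no further kernel directions.

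Next I would perform the finite-dimensional reduction. Let $L_{\varepsilon,k}$ be the linearization of \eqref{main} at $W_{\varepsilon,k}$ and let $Q$ project onto the $L^{2}$-orthogonal complement of the near-kernel. The first main step is to prove that $QL_{\varepsilon,k}Q$ is invertible on that complement, with an operator bound independent of $k$. This is exactly the difficulty singled out in the introduction: one must simultaneously control the bump $u_{\varepsilon}$ concentrating near the origin and the $2k$ bumps escaping to infinity, whose mutual distances degenerate like $\pi\tilde r/k\sim\varepsilon m\ln k$. I would argue by contradiction, blowing up around each concentration point; near the origin the limiting operator is the invertible linearization at $u_{\varepsilon}$, while near each $\tilde\xi_{j}$ it is $-\varepsilon^{2}\Delta+1-(p-1)U_{\varepsilon}^{p-2}$, whose kernel is known by \eqref{non}. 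The new feature relative to \cite{WY,WY1,DWY} is that adjacent peaks carry opposite signs, so the off-diagonal interaction blocks appear with alternating signs; the crucial point is to establish pointwise interaction bounds, using $U_{\varepsilon}(x)\sim e^{-|x|/\varepsilon}$, precise enough that the infinite sum of these interactions remains a genuine perturbation as $k\to\infty$ and does not destroy coercivity on the complement. Granting invertibility, the contraction mapping principle yields a unique small $\phi=\phi_{\varepsilon,k}(\tilde r)$, orthogonal to the near-kernel, solving the auxiliary equation, with $\|\phi\|=\tilde o_{k}(1)$.

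Then I would reduce to a one-dimensional problem by analyzing the reduced energy $\mathcal E(\tilde r):=I_{\varepsilon}(W_{\varepsilon,k}+\phi_{\varepsilon,k}(\tilde r))$ on the interval $\tilde S_{k}$, where $I_{\varepsilon}$ denotes the energy functional of \eqref{main}. Its $\tilde r$-dependent part is governed by the competition between the potential contribution $\sum_{j}\int_{\r^{N}}(V-1)U_{\varepsilon,\tilde\xi_{j}}^{2}\,dx$, which under $(V_{2}^{-})$ lowers the energy by a negative amount of size $\sim -a\,k\,\tilde r^{-m}$, and the interaction among the peaks $\sum_{i\neq j}(-1)^{i+j}\int_{\r^{N}}U_{\varepsilon,\tilde\xi_{i}}^{p}U_{\varepsilon,\tilde\xi_{j}}\,dx$, which is dominated by the opposite-sign nearest neighbours and, being repulsive, raises the energy by a positive amount of size $\sim c\,k\,e^{-\pi\tilde r/(k\varepsilon)}$. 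Differentiating in $\tilde r$, the derivative of $\mathcal E$ is negative at the left endpoint of $\tilde S_{k}$ (where the exponential interaction dominates) and positive at the right endpoint (where the algebraically decaying potential term dominates); hence $\mathcal E$ has an interior minimum $\tilde r_{k}$, and balancing the two derivatives fixes it to leading order at $\tilde r_{k}\sim\frac{\varepsilon m}{\pi}k\ln k$, the center of $\tilde S_{k}$. Since the whole construction respects the symmetry, this critical point of $\mathcal E$ produces a critical point of $I_{\varepsilon}$, that is, a genuine sign-changing solution of \eqref{main} of the form \eqref{form a}, with $u_{\varepsilon}$ contributing positive peaks in a bounded region and the $2k$ alternating peaks concentrating at radius $\tilde r_{k}\to\infty$.

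The step I expect to be the main obstacle is the uniform invertibility of $QL_{\varepsilon,k}Q$. The mixture of a fixed interior concentration with an unbounded number of peaks whose separation shrinks only logarithmically in $k$ means that the usual block-diagonal comparison is not enough: one needs interaction estimates sharp enough to sum over all pairs, and the alternating signs must be handled so that cancellations neither spoil the a priori bound nor reverse the sign of the leading terms in the reduced energy. Once these precise estimates are in place, the remaining steps follow the now-standard reduction scheme and the proof of Theorem \ref{thm1}.
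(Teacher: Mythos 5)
Your overall architecture is exactly the paper's: the paper proves Theorem \ref{thm2} only as a sketch, setting $\tilde W_{\varepsilon,k}=u_{\varepsilon}+\sum_{j=1}^{2k}(-1)^{j}U_{\varepsilon,\tilde\xi_{j}}$, rerunning the Section~2 reduction in a symmetric subspace, and then minimizing the reduced energy, whose $r$-dependent part is $-\tfrac{2ak\beta}{r^{m}}(1+O(r^{-\delta}))+\tfrac{\gamma k}{2}(r/k)^{-\frac{N-1}{2}}e^{-\pi r/(\varepsilon k)}(1+O(r^{-1}))$ — precisely the competition you describe, with the same interior minimum at $r\sim\frac{\varepsilon m}{\pi}k\ln k$. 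Your endpoint argument via the sign of $\mathcal E'$ is a harmless variant of the paper's comparison of endpoint values with the value at the balancing radius.

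There is, however, one concrete gap: the symmetry group you invoke is too coarse to do what you claim. Rotation by $2\pi/k$ sends $\tilde\xi_{j}$ to $\tilde\xi_{j+2}$, and the reflection fixing the $x_{1}$-axis sends $\tilde\xi_{j}$ to $\tilde\xi_{2k-j+2}$; both preserve the parity of $j$, so the $2k$ peaks split into \emph{two} orbits (odd- and even-indexed). In the corresponding invariant subspace the translational near-kernel therefore contains two independent directions, $\sum_{j\ \mathrm{odd}}\partial_{r}U_{\varepsilon,\tilde\xi_{j}}$ and $\sum_{j\ \mathrm{even}}\partial_{r}U_{\varepsilon,\tilde\xi_{j}}$, not the single direction $\partial_{\tilde r}\tilde W_{\varepsilon,k}$. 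The auxiliary equation then carries two Lagrange multipliers, and the single free parameter $\tilde r$ cannot kill both; the final step of your argument would not close. The paper avoids this by working in the finer space $\tilde H_{s}$, whose elements satisfy the \emph{signed} rotation condition $u(r\cos\theta,r\sin\theta,x'')=(-1)^{j}u(r\cos(\theta+\tfrac{j\pi}{k}),r\sin(\theta+\tfrac{j\pi}{k}),x'')$; this ties the two orbits together, reduces $\tilde E_{k}$ to a single orthogonality constraint $\sum_{j=1}^{2k}\int U_{\varepsilon,\tilde\xi_{j}}^{p-2}Z_{j}\varphi=0$, and is also what forces the constructed solution to be genuinely sign-changing. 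The fix is simply to impose this anti-invariance from the start; the rest of your proposal (uniform invertibility by blow-up at the origin and at the escaping peaks, summable interaction estimates, contraction mapping, reduced-energy minimization) then matches the paper's intended proof.
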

In order to prove Theorem \ref{thm2}  by the reduction method, we need to balance  the positive term $\int_{\mathbb{R}^{N}}U^{p}_{\varepsilon, \tilde{\xi}_{j}}U_{\varepsilon, \tilde{\xi}_{j+1}}dx$ in the corresponding functional  with the linear term $\int_{\mathbb{R}^{N}} (V(x)-1)U^{2}_{\varepsilon, \tilde{\xi}_{j}}$. To make it, $V(x)$ should satisfy $(V_{2}^{-})$, instead of $(V_{2}^{+})$.

This paper is organized as follows. In Section~2, we will introduce the finite-dimensional reduction method. The main theorems are proved  in Section~3.
\bigskip

\section{Finite-dimensional reduction}
\bigskip

In this section, we carry out the Lyapunov-Schmidt reduction procedure, which is generally based on the non-degeneracy of the related solution. Such a result fitting our approach was proved in \cite{T}, we state it as follows. Let $u_{\varepsilon}$ be the solution of \eqref{main} obtained in Theorem \ref{thm0}.
\begin{lemma}\label{non}\cite{T}
Assume that $V(x)$ satisfies $(V_{1})$, then $u_{\varepsilon}$  is non-degenerate, that is, if  $- \va^{2} \Delta \psi +V(x)\psi - (p-1)u_{\varepsilon}^{p-2} \psi = 0$ , then $\psi= 0$.
\end{lemma}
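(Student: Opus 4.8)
The plan is to argue by contradiction through a blow-up (rescaling) analysis centered at the concentration point $x_0 = 0$. Suppose the conclusion fails; then there is a sequence $\varepsilon_n \to 0$ and functions $\psi_n \not\equiv 0$ solving $-\varepsilon_n^2\Delta\psi_n + V\psi_n - (p-1)u_{\varepsilon_n}^{p-2}\psi_n = 0$, which we normalize by $\|\psi_n\|_{L^\infty(\r^N)} = 1$. Writing $V_0 = V(0)$, recall from Theorem \ref{thm0} that $u_{\varepsilon_n}(x) = V_0^{1/(p-2)}U(\sqrt{V_0}\,x/\varepsilon_n) + \omega_{\varepsilon_n}(x)$ with a small radial error. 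First I would show that the $L^\infty$-norm of $\psi_n$ is attained within distance $O(\varepsilon_n)$ of the origin: away from the peak $u_{\varepsilon_n}^{p-2}$ is exponentially small, so the zeroth-order coefficient $V - (p-1)u_{\varepsilon_n}^{p-2}$ is bounded below by a positive constant there, and the maximum principle rules out an interior positive maximum or negative minimum of $\psi_n$ in that region. Hence the normalization is genuinely felt near the peak.

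Next, rescale by setting $v_n(y) = \psi_n(\varepsilon_n y/\sqrt{V_0})$, so that $v_n$ solves
\[-\Delta v_n + \frac{V(\varepsilon_n y/\sqrt{V_0})}{V_0}\,v_n - (p-1)\frac{u_{\varepsilon_n}(\varepsilon_n y/\sqrt{V_0})^{p-2}}{V_0}\,v_n = 0.\]
Standard interior elliptic estimates give uniform $C^2_{loc}$ bounds, so along a subsequence $v_n \to v_0$ in $C^2_{loc}(\r^N)$. Since $V(\varepsilon_n y/\sqrt{V_0}) \to V_0$ and $u_{\varepsilon_n}(\varepsilon_n y/\sqrt{V_0})^{p-2} \to V_0\,U(y)^{p-2}$, the limit satisfies $-\Delta v_0 + v_0 - (p-1)U^{p-2}v_0 = 0$. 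By the non-degeneracy of $U$ recalled in the excerpt, $v_0 = \sum_{i=1}^N b_i\,\partial_{y_i}U$ for some constants $b_i$.

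The crucial step, where $(V_1)$ enters, is to show that every $b_i = 0$. Because $u_\varepsilon$ solves \eqref{main}, differentiating its equation in $x_i$ gives $L_\varepsilon(\partial_{x_i}u_\varepsilon) = -(\partial_{x_i}V)\,u_\varepsilon$, where $L_\varepsilon\phi = -\varepsilon^2\Delta\phi + V\phi - (p-1)u_\varepsilon^{p-2}\phi$. Since $L_\varepsilon$ is self-adjoint and $\psi_\varepsilon \in \ker L_\varepsilon$, testing against $\partial_{x_i}u_\varepsilon$ yields the orthogonality relation $\int_{\r^N}(\partial_{x_i}V)\,u_\varepsilon\psi_\varepsilon\,dx = 0$ for each $i$. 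I would then insert the Taylor expansion $\partial_{x_i}V(x) = \sum_k \partial^2_{ik}V(0)\,x_k + o(|x|)$ (valid since $0$ is a critical point) and rescale; the leading contribution is proportional to $\varepsilon^{N+1}\sum_k \partial^2_{ik}V(0)\int_{\r^N} y_k\,U(y)v_0(y)\,dy$. Using $\int y_k U\,\partial_{y_j}U\,dy = -\tfrac12\delta_{jk}\int U^2\,dy$, this reduces to $\sum_k \partial^2_{ik}V(0)\,b_k = 0$ for all $i$, i.e. $D^2V(0)\,b = 0$. Non-degeneracy of the critical point means $D^2V(0)$ is invertible, forcing $b = 0$ and hence $v_0 \equiv 0$.

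Finally, I would convert $v_0 \equiv 0$ into a contradiction: since the blow-up limit at the peak vanishes and, by the maximum-principle step, no mass survives away from the peak, one concludes $\|\psi_n\|_{L^\infty} \to 0$, contradicting $\|\psi_n\|_{L^\infty} = 1$. I expect the main obstacle to be twofold. The off-peak control must be made rigorous, verifying that the $L^\infty$-norm really localizes at the peak and that $\psi_n$ decays, which requires careful barrier/comparison arguments exploiting the coercivity of $-\varepsilon^2\Delta + V$ where $u_\varepsilon^{p-2}$ is negligible. Moreover, one must control the error term $\omega_{\varepsilon_n}$ together with the remainder in the Taylor expansion so that the claimed leading-order asymptotics in the orthogonality relation are genuinely dominant. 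The algebraic reduction to $D^2V(0)\,b = 0$ is the conceptual heart, but these estimates are where the real work lies.
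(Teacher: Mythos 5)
The paper does not actually prove this lemma: it is imported wholesale from the cited reference [T], so there is no in-house argument to measure you against. Your blow-up scheme is the standard route to such non-degeneracy statements for single-peak solutions concentrating at a non-degenerate critical point of $V$, and it is in substance the argument one finds in the literature (including the cited source): normalize $\|\psi_n\|_{L^\infty}=1$, localize the sup near the peak by the maximum principle, rescale to the linearized operator around $U$, identify the limit with $\sum_i b_i\partial_{y_i}U$, and kill the $b_i$ via the identity $\int_{\r^N}(\partial_{x_i}V)\,u_\varepsilon\psi_\varepsilon\,dx=0$ obtained from differentiating the equation and self-adjointness. The skeleton is correct, including the algebra $\int y_k U\,\partial_{y_j}U\,dy=-\tfrac12\delta_{jk}\int U^2\,dy$ leading to $D^2V(0)b=0$; note that under $(V_1)$ radiality gives $D^2V(0)=V''(0)\,\mathrm{Id}$, so the hypothesis used is simply $V''(0)\neq 0$. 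Two points need to be made explicit. First, the contradiction along $\varepsilon_n\to 0$ proves the statement only for all sufficiently small $\varepsilon$, which is what the paper needs but may force shrinking the $\varepsilon_0$ of Theorem \ref{thm0}. Second, the error bookkeeping in the orthogonality identity is tighter than your sketch suggests: the leading term is of order $\varepsilon^{N+1}$, whereas a naive Cauchy--Schwarz using only $\|\omega_\varepsilon\|_{H^1}=o(\varepsilon^{N/2})$ and $\|\psi_\varepsilon\|_{L^2}=O(\varepsilon^{N/2})$ bounds the contribution of the remainder $\omega_\varepsilon$ by $o(\varepsilon^{N})$, which is one power short. You must first establish pointwise exponential decay of $\psi_\varepsilon$ on the scale $\varepsilon$ away from the peak (the comparison argument you mention), so that the weight $|\partial_{x_i}V(x)|\lesssim |x|$ yields $\bigl\||x|\psi_\varepsilon\bigr\|_{L^2}=O(\varepsilon^{(N+2)/2})$ and all error terms become $o(\varepsilon^{N+1})$; with that quantitative input the argument closes, and your proposal is a sound substitute for the external citation.
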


\bigskip

Let
\[\begin{split}H_{s} = \Bigl\{ & u \in H^{1}(\r^{N}), \, u(x) \text{ is even in } x_{i}, \,i =2, 3, \cdots N, \\
& u(r\cos \theta, r\sin \theta, x'') = u(r\cos (\theta + \frac{2\pi j}{k}), r\sin (\theta + \frac{2\pi j}{k}), x'' ), j=1, 2, \cdots, k  \Bigr\},
\end{split}\]
where the norm of $H^{1}(\r^{N})$ is induced by the inner product
\[ \langle  u, v \rangle = \int_{\r^{N}} \bigl(\varepsilon^{2} \nabla u \nabla v + V(x)u v \bigr) \, dx, \, \, \, u , v \in H^{1}(\r^{N}). \]
Define
\[E_{k} := \Bigl\{ v \in H_{s} : \sum_{j=1}^{k} \int_{\r^{N}} U_{\varepsilon, \xi_{j}}^{p-2}Z_{j}v \, dx = 0 \, \,  \Bigr\},\]
where
\[Z_{j}= \frac{\partial U_{\varepsilon, \xi_{j}}}{\partial r}, \, j=1, 2, \cdots, k.  \]
Solutions of \eqref{main} will be found as critical points of the associated functional
\[ I(u) = \frac12 \int_{\r^{N}} \Bigl(\varepsilon^{2} |\nabla u|^{2} + V(x) u^{2} \Bigr)\, dx  - \frac{1}{p} \int_{\r^{N}} |u|^{p}\, dx\]
of \eqref{main}. Particularly, we look for a solution $u$ of  \eqref{main} in the form
\[u(x) = u_{\varepsilon} + \sum_{j=1}^{k} U_{\varepsilon, \xi_{j}} + \omega_{k}: = W_{\varepsilon, k} + \omega_{k}. \]
Expanding the functional $I$, we require that $\omega_{k}$ satisfies the equation
\begin{equation}\label{eq:1-1-1}\left\{\begin{array}{lll}
L_{k} \omega =  l_{k} + R_{k}(\omega),\\
\omega \in H^{1}(\r^{N}),
\end{array}
\right.
\end{equation}
where $L_{k}$ is defined by

\begin{equation}\label{eq:1-1-2}
\langle L_{k}\omega, \varphi \rangle = \int_{\r^{N}} \bigl(\varepsilon^{2} \nabla \omega \nabla \varphi  +V(x) \omega \varphi- (p-1) W_{\varepsilon, k}^{p-2} \omega \varphi\bigr)\, dx, \, \, \forall \varphi \in H^{1}(\r^{N}),
\end{equation}
which is a bounded linear operator in $H^{1}(\r^{N})$, and $l_{k} \in H^{1}(\r^{N})$ satisfies
\begin{equation}\label{eq:1-1-3}
\langle l_{k}, \varphi \rangle =  \sum_{j=1}^{k}\int_{\r^{N}} \bigl(1 - V(x)\bigr) U_{\varepsilon, \xi_{j}} \varphi \, dx + \int_{\r^{N}} \Bigl( W_{\varepsilon, k}^{p-1} - u_{\varepsilon}^{p-1} -\sum_{j=1}^{k} U_{\varepsilon, \xi_{j}}^{p-1} \Bigr) \varphi\, dx,
\end{equation}
as well as  that $R_{k}(\omega) \in H^{1}(\r^{N})$ given by
\begin{equation}\label{eq:1-1-4}
\langle R_{k}(\omega) , \varphi \rangle = \int_{\r^{N}} \Bigl( \bigl( W_{\varepsilon, k}+ \omega  \bigr)^{p-1} -  W_{\varepsilon, k}^{p-1} - (p-1) W_{\varepsilon, k}^{p-2} \omega  \Bigr)\varphi \, dx.
\end{equation}

In order to solve \eqref{eq:1-1-1} and look for a solution in $H_{s}$,  we define the projection $P_{k}$ from $H_{s}$ to $E_{k}$ by
\[P_{k}u= u - a_{k} \sum_{j=1}^{k} Z_{j},  \]
where $a_{k}$ is determined by $\langle P_{k}u, \sum_{j=1}^{k}Z_{j}\rangle = 0 $. It can be proved that the equation  is solvable, so we can find $a_{k}$.

Now, we show that the operator $P_{k}L_{k}$ is invertible in $E_{k}$. Before starting to prove, we give some notations. Let $\varepsilon_0>0$ be as in Theorem \ref{thm0} and set
\[ \Omega_{j} = \Bigl\{x = (x', x'') \in \r^{2} \times \r^{N-2} : \langle \frac{(x', 0)}{|x'|},  \frac{\xi_{j}}{|\xi_{j}|} \rangle \geq \cos\frac{\pi}{k} \Bigr\}, \, \, j=1, 2, \cdots, k.\]
 We have

\begin{lemma}\label{lm1} For any $0<\varepsilon< \varepsilon_0$  and $k$ sufficiently large, there exists a positive constant $\rho>0$, such that $r \in S_{k}= \Bigl[\bigl(\frac{\varepsilon m}{ 2 \pi } -\theta\bigr) k \ln k, \bigl(\frac{\varepsilon m}{2  \pi } +\theta\bigr) k \ln k\Bigr] $,
\[\| P_{k}L_{k}u\| \geq \rho \|u\|, \, \, u \in E_{k}.\]
\end{lemma}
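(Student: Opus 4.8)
The plan is to argue by contradiction along the standard blow-up scheme for Lyapunov--Schmidt reductions, adapted to the fact that here $\varepsilon$ is fixed and the genuine parameter is the number of peaks $k$. Suppose the estimate fails. Then there are sequences $k=k_n\to\infty$, radii $r_{k_n}\in S_{k_n}$, and functions $u_n\in E_{k_n}$ with $\|u_n\|=1$ and $f_n:=P_{k_n}L_{k_n}u_n$ satisfying $\|f_n\|\to 0$. The goal is to show $u_n\to 0$ in $H^1(\r^N)$, contradicting $\|u_n\|=1$. Testing the identity $P_{k_n}L_{k_n}u_n=f_n$ against $u_n$ and absorbing the finite-dimensional projection correction coming from the $Z_j$ directions (which is lower order), the problem reduces to proving
\[
(p-1)\int_{\r^N} W_{\varepsilon,k_n}^{p-2}u_n^2\,dx\leq (1-\rho)\|u_n\|^2+o(1).
\]

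The core is a two-scale localization of the weight $W_{\varepsilon,k_n}^{p-2}$. Away from the origin and from the circle of peaks this weight is exponentially small, so only the region near $0$ (where $W_{\varepsilon,k_n}^{p-2}\approx u_\varepsilon^{p-2}$) and the $k_n$ regions near the $\xi_j$ (where $W_{\varepsilon,k_n}^{p-2}\approx U_{\varepsilon,\xi_j}^{p-2}$) contribute; here one uses that the nearest-neighbour distance $|\xi_i-\xi_j|\sim 2\pi r_{k_n}/k_n\sim \varepsilon m\ln k_n\to\infty$ and that $|\xi_j|\to\infty$, so the peaks decouple from each other and from $u_\varepsilon$. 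Near the origin, $u_n\rightharpoonup u_0$ weakly in $H^1(\r^N)$; since the peaks escape to infinity, $W_{\varepsilon,k_n}^{p-2}\to u_\varepsilon^{p-2}$ locally and the weak form passes to the limit, giving $-\varepsilon^2\Delta u_0+V u_0-(p-1)u_\varepsilon^{p-2}u_0=0$, whence $u_0=0$ by Lemma \ref{non}. Near a peak, translating by $\xi_1^{(n)}=(r_{k_n},0,\dots,0)$ gives $\bar u_n(\cdot)=u_n(\cdot+\xi_1^{(n)})\rightharpoonup\bar u$, which solves $-\varepsilon^2\Delta\bar u+\bar u-(p-1)U_\varepsilon^{p-2}\bar u=0$; by the non-degeneracy of $U_\varepsilon$ its kernel is $\mathrm{span}\{\partial_{x_i}U_\varepsilon\}$, the evenness in $x_2,\dots,x_N$ built into $H_s$ (and inherited by $\bar u$ since $\xi_1^{(n)}$ lies on the $x_1$-axis) eliminates all but $\partial_{x_1}U_\varepsilon=Z_1$, and the orthogonality encoded in $E_{k_n}$ eliminates that last direction, so $\bar u=0$.

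The main obstacle, and the place where the $k\to\infty$ regime differs essentially from the fixed-number-of-peaks case, is that weak vanishing at a single peak only yields $\int U_\varepsilon^{p-2}\bar u_n^2=o(1)$, whereas summing the identical contributions of the $k_n$ peaks requires this to be $o(1/k_n)$. I would resolve this using the symmetry of $H_s$: since $u_n$ is invariant under the rotation by $2\pi/k_n$, its norm splits as $\|u_n\|^2=k_n\|u_n\|_{\Omega_1}^2$ over the congruent sectors $\Omega_j$, and likewise the peak part of $\int_{\r^N}W_{\varepsilon,k_n}^{p-2}u_n^2$ equals $k_n\int_{\Omega_1}W_{\varepsilon,k_n}^{p-2}u_n^2$. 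Thus the whole estimate reduces to a single-sector spectral gap: for functions on $\Omega_1$ with the right parity that are orthogonal to $Z_1$ in the $E_{k_n}$ sense, one has $(p-1)\int_{\Omega_1}U_{\varepsilon,\xi_1}^{p-2}u^2\leq(1-\rho)\|u\|_{\Omega_1}^2$. Multiplying by $k_n$ preserves the constant $1-\rho$, which is exactly what renders the infinite sum of interactions harmless.

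Establishing this per-sector gap uniformly in $k_n$ is the technically delicate step. It requires handling the boundary of $\Omega_1$ (controlling the flux terms across $\partial\Omega_j$, which is manageable because the tails of neighbouring peaks meet there with exponentially small amplitude), treating the shared tail of $u_\varepsilon$ near the origin, and quantifying the decoupling so that all cross terms stay within $o(\|u_n\|^2)$. The non-degeneracy results (Lemma \ref{non} and that of $U_\varepsilon$) only guarantee vanishing of the weak limits; converting this qualitative information into the quantitative, uniform-in-$k$ gap is where the precise interaction estimates announced in the introduction must be carried out, and I expect this to be the hardest part of the argument.
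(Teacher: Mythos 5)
Your overall strategy --- contradiction, normalization, localization near the origin and near a single peak, non-degeneracy of $u_\varepsilon$ and of $U_\varepsilon$ combined with parity and the orthogonality defining $E_k$, and the $\mathbb{Z}_k$-symmetry to control the sum over peaks --- is exactly the paper's. But there is a genuine gap at the decisive step, and the way you frame it shows it is not closed. You correctly observe that with $\|u_n\|=1$ the single-peak analysis must produce $o(1/k_n)$ rather than $o(1)$, but your proposed fix is to reduce to a ``per-sector spectral gap'' $(p-1)\int_{\Omega_1}U_{\varepsilon,\xi_1}^{p-2}u^2\le(1-\rho)\|u\|^2_{\Omega_1}$, uniform in $k$, which you then explicitly leave unproven and flag as the hardest part. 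The paper neither states nor needs such a uniform coercivity estimate. It normalizes $\|v_k\|^2=k$, so that by symmetry each sector carries $H^1$-mass $O(1)$; then the translated sequence $\bar v_k=v_k(\cdot+\xi_1)$ is bounded (not vanishing) in $H^1_{loc}$, its weak limit $v$ is a genuine object, and the soft chain ``$v$ solves the limit equation, hence $v=c_1\partial_{x_1}U_\varepsilon$ by non-degeneracy and evenness, hence $c_1=0$ by the constraint inherited from $E_k$'' yields $\int_{B_R(\xi_1)}v_k^2=o(1)$ \emph{per sector}, hence $o(k)$ in total, which is all that is required against $\|v_k\|^2=k$. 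In your normalization this is precisely the rescaling of $\bar u_n$ by $\sqrt{k_n}$; without it your peak blow-up is vacuous (the weak limit of $\bar u_n$ is trivially $0$ because its local norm is $O(k_n^{-1/2})$, so non-degeneracy is never actually used), and with it no quantitative uniform gap is needed --- only compactness along the contradicting sequence.

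A second, smaller gap: you pass to the limit equation for $\bar u$ by saying the weak form passes to the limit, but the contradiction hypothesis only controls $\langle L_{k}u_n,\psi\rangle$ for $\psi\in E_k\cap H_s$. To test against a localized $\varphi\in C_0^\infty(B_R(\xi_1))$ one must first symmetrize it over the $k$ rotations and project it back into $E_k$, and then show that the resulting correction along $\sum_j Z_j$ (the coefficient $\alpha_k$ in the paper's proof) is $o_k(1)$; this is done by inserting the special test function $\eta\sum_j Z_j$. This step is where the interaction estimates between neighbouring peaks actually enter the linear theory, and it is absent from your outline.
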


\begin{proof} We prove by contradiction. Assume on the contrary that there exist $r_{k}\in S_{k}$ and $v_{k} \in E_{k}$ as $k \to \infty$, such that
\[\| P_{k}L_{k} v_{k}\| = o_{k}(1) \|v_{k}\|. \]
Thus, for any $\psi \in E_{k}$,
\begin{equation}\label{eq:1.1}
\langle L_{k}v_{k}, \psi\rangle  = \langle P_{k} L_{k}v_{k}, \psi \rangle  = o_{k}(1) \|v_{k}\|\cdot \|\psi\|.
\end{equation}
 By the linearity of \eqref{eq:1.1}, we may assume that $\|v_{k}\|^{2}=k $.  Therefore, for $\psi \in E_k$,
\begin{equation}\label{eq:1.1-1}\begin{split}
o(\sqrt{k}) \|\psi\| = &  \int_{\r^{N}}\bigl( \varepsilon^{2} \nabla v_{k}\nabla \psi + V(x) v_{k}\psi - (p-1) W_{\varepsilon, k}^{p-2} v_{k} \psi \bigr) \, dx\\
= & \int_{\r^{N}}\Bigl( \varepsilon^{2} \nabla v_{k}\nabla \psi + V(x) v_{k}\psi - (p-1) (\sum_{j=1}^{k}  U_{\varepsilon, \xi_{j}})^{p-2} v_{k} \psi \Bigr) \, dx - (p-1) \int_{\r^N} u_\varepsilon^{p-2} v_k \psi \, dx \\
& + O\Bigl(\sum_{j=1}^k \int_{\r^N} u_\varepsilon^{\frac{p-2}{2}} U_{\varepsilon, \xi_{j}}^{\frac{p-2}{2}} v_k \psi \, dx\Bigr).
\end{split}
\end{equation}

In particular,
\begin{equation}\label{eq:1.1-5}
 \int_{\r^{N}}\bigl( \varepsilon^{2} |\nabla v_{k}|^{2} + V(x) v_{k}^{2} - (p-1) W_{\varepsilon, k}^{p-2} v_{k}^{2} \bigr) dx =  \langle L_{k} v_{k}, v_{k} \rangle =   \langle P_k L_{k} v_{k}, v_{k} \rangle =o(k).
\end{equation}

Since $\|v_{k}\|^{2} = k$, by symmetry we have
\[\int_{\Omega_{j}\setminus B_{R_{0}}(0)} \Bigl( \varepsilon^{2} |\nabla v_{k}|^{2} + V(x) v_{k}^{2} \Bigr)dx \leq 1. \]
Denote $\bar v_{k} = v_{k}(x + \xi_{1})$. Since $r_{k} \in S_{k}$, we get
\[|\xi_{2} - \xi_{1}| = r\sin \frac{\pi}{k} \geq \frac{2m-N+1}{16}\ln k. \]
Thus, for sufficiently large $k> 0$, there exists $R> 0$ such that $B_{R}(\xi_{1}) \subset \Omega_{1}\setminus B_{R_{0}}(0)$ and
\[\int_{B_{R}(0)}  \Bigl( \varepsilon^{2} |\nabla \bar v_{k}|^{2} + V(x)\bar v_{k}^{2} \Bigr)dx \leq 1. \]
So, up to subsequence, we may assume that there is $ v \in H^{1}(\r^{N})$ such that for $k \to \infty$,
\begin{equation}\label{weak1} \bar v_{k} \rightharpoonup v\, \,   \text{ in } \, H^{1}(\r^{N})  \end{equation}
and
\[\bar v_{k} \to v \, \, \text{ in } L_{loc}^{2}(\r^{N}). \]
The fact $v_{k} \in E_{k}$ implies
 \[\int_{\r^{N}} U_{\varepsilon}^{p-2} \frac{\partial U_{\varepsilon}}{\partial x_{1}} \bar v_{k} \, dx = 0. \]
By the decay of $U_{\varepsilon}$ at infinity,  for any $\sigma> 0$, there exists $R> 0$ such that
 \begin{equation}\label{weak4}
 \Bigl| \int_{\r^{N} \setminus B_{R}(0)} U_{\varepsilon}^{p-2} \frac{\partial U_{\varepsilon}}{\partial x_{1}} v \, dx  \Bigr|  < \frac{\sigma}{3} \text { and } \Bigl| \int_{\r^{N} \setminus B_{R}(0)} U_{\varepsilon}^{2p-4} \Bigl(  \frac{\partial U_{\varepsilon}}{\partial x_{1}} \Bigr)^{2} \, dx  \Bigr|  < \frac{\sigma^{2}}{36}.
 \end{equation}
Since $\bar v_{k}$ is bounded and converges to $v$ in $L_{loc}^{2}(\r^{N})$, then there exists $k_{0}$ such that for $k > k_{0}$, it holds
 \begin{equation}\label{weak3}
 \Bigl| \int_{B_{R}(0)} U_{\varepsilon}^{p-2} \frac{\partial U_{\varepsilon}}{\partial x_{1}} \bigl( \bar v_{k}- v\bigr) \, dx  \Bigr| < \frac{\sigma}{3}.
 \end{equation}
For $r \in S_{k}$,  there exists small $ \sigma_{1}> 0$, such that
  \[ \|\bar v_{k}\|_{H^{1}(B_{\sigma_{1} \ln k }(0))} \leq 1. \]
By \eqref{weak4} and $\|\bar v_{k}\| = \sqrt{k}$, we obtain
 \begin{equation}\label{weak5}\begin{split}
& \Bigl| \int_{\r^{N} \setminus B_{R}(0)} U_{\varepsilon}^{p-2} \frac{\partial U_{\varepsilon}}{\partial x_{1}}  \bar v_{k} \, dx \Bigr| \\
\leq  &  \Bigl| \int_{\r^{N} \setminus B_{\sigma_{1} \ln k }(0)} U_{\varepsilon}^{p-2} \frac{\partial U_{\varepsilon}}{\partial x_{1}}  \bar v_{k} \, dx \Bigr| +  \Bigl| \int_{B_{\sigma_{1} \ln k}(0)  \setminus B_{R}(0)} U_{\varepsilon}^{p-2} \frac{\partial U_{\varepsilon}}{\partial x_{1}}  \bar v_{k} \, dx \Bigr| \\
\leq & \frac{C_{\varepsilon}}{k^{p-1-\theta -\frac{1}{2}}} +\frac{\sigma}{6}.
\end{split}
 \end{equation}
 For $\bar k_{0}> k_{0} $ large and  $k > \bar k_{0}$, there holds
  \begin{equation}\label{weak6}\begin{split}
 \Bigl| \int_{\r^{N} \setminus B_{R}(0)} U_{\varepsilon}^{p-2} \frac{\partial U_{\varepsilon}}{\partial x_{1}}  \bar v_{k} \, dx \Bigr| <  \frac{\sigma}{3}.
\end{split}
 \end{equation}
From \eqref{weak4}, \eqref{weak3} and \eqref{weak6}, we deduce that for $k > \bar k_{0}$,
\begin{equation}\label{weak2}\begin{split}
& \Bigl| \int_{\r^{N}} U_{\varepsilon}^{p-2} \frac{\partial U_{\varepsilon}}{\partial x_{1}}  v \, dx  \Bigr|  \\
 = & \Bigl| \int_{\r^{N}} U_{\varepsilon}^{p-2} \frac{\partial U_{\varepsilon}}{\partial x_{1}} \bigl( \bar v_{k}- v\bigr) \, dx  \Bigr|  \\
\leq &  \Bigl| \int_{B_{R}(0)} U_{\varepsilon}^{p-2} \frac{\partial U_{\varepsilon}}{\partial x_{1}} \bigl( \bar v_{k}- v\bigr) \, dx  \Bigr|  +  \Bigl| \int_{\r^{N} \setminus B_{R}(0)} U_{\varepsilon}^{p-2} \frac{\partial U_{\varepsilon}}{\partial x_{1}} v \, dx  \Bigr|   +  \Bigl| \int_{\r^{N} \setminus B_{R}(0)} U_{\varepsilon}^{p-2} \frac{\partial U_{\varepsilon}}{\partial x_{1}}  \bar v_{k} \, dx \Bigr|  \\
 < & \sigma,
\end{split}
 \end{equation}
 which yields that
 \begin{equation}\label{eq:1-2-1} \int_{\r^{N}} U_{\varepsilon}^{p-2} \frac{\partial U_{\varepsilon}}{\partial x_{1}} v\, dx = 0.\end{equation}
Moreover, $\bar v_{k}$ is even in $x_{i}$, for $i =2, 3, \cdots, N$, so is $v$.

 Now, we claim that $v$ satisfies
 \[- \varepsilon^{2} \Delta v + v - (p-1)U_{\varepsilon}^{p-2} v = 0, \text{ in } \mathbb{R}^N. \]

For any $R>0$, let $\varphi \in C_0^\infty(B_R(0))$ be even in $x_i$, $i =2, 3, \cdots N$. Then $ \varphi(x- \xi_1) \in C_0^\infty(B_R(\xi_1))$ and  $\sum_{j=1}^k \mathcal{R}_{\frac{2\pi(j-1)}{k}}\varphi(x - \xi_1) \in H_s$, where $\mathcal{R}_{\theta}$ is the rotated operator given by
\[\mathcal{R}_{\theta} \varphi(x', x'') =\varphi(\mathcal{R}_\theta x', x'') =\varphi(x_{1}\cos\theta- x_{2}\sin\theta, x_{1}\sin\theta+x_{2}\cos\theta, x'')  \]
with $(x', x'') = \in \r^2 \times \r^{N-2}$. Since $\sum_{j=1}^k \mathcal{R}_{\frac{2\pi(j-1)}{k}}\varphi(x - \xi_1) \notin E_{k} $, we choose $a_{k}$ such that
\[\bar \varphi_{k}(x) =\sum_{j=1}^k \mathcal{R}_{\frac{2\pi(j-1)}{k}}\varphi(x - \xi_1)  - a_{k} \sum_{j=1}^{k}Z_{j} \in E_{k}.   \]
By the symmetry, one has
 \[a_{k} = \frac{\langle \sum_{j=1}^k \mathcal{R}_{\frac{2\pi(j-1)}{k}}\varphi(x - \xi_1), \sum_{j=1}^{k}Z_{j}  \rangle }{\| \sum_{j=1}^{k}Z_{j}\|^{2}} = \frac{k \langle \varphi(x - \xi_1), \sum_{j=1}^{k}Z_{j}  \rangle }{\| \sum_{j=1}^{k}Z_{j}\|^{2}}. \]
We may verify that
\[\|\sum_{j=1}^{k}Z_{j}\|^{2} = k \Bigl( \|\frac{\partial U_{\varepsilon} }{\partial x_{1}}\|^{2} + o(1)\Bigr). \]
Thus
\[|a_{k}| = \frac{|\langle \varphi(x - \xi_1), \sum_{j=1}^{k}Z_{j}  \rangle  |}{\|\frac{\partial U_{\varepsilon} }{\partial x_{1}}\|^{2} + o(1)} \leq C,\] which gives $\|\bar\varphi_{k}\|^{2} \leq Ck$.

Inserting $\bar\varphi_{k}$
into \eqref{eq:1.1-1}, we have
\begin{equation*}\begin{split}
o(\sqrt{k}) \|\bar\varphi_{k}\| = &  \int_{\r^{N}}\Bigl( \varepsilon^{2} \nabla v_{k}\nabla \bar\varphi_{k} + V(x) v_{k}\bar\varphi_{k} - (p-1) (\sum_{j=1}^{k}  U_{\varepsilon, \xi_{j}})^{p-2} v_{k}\bar\varphi_{k} \Bigr) \, dx - (p-1) \int_{\r^N} u_\varepsilon^{p-2} v_k \bar\varphi_{k} \, dx \\
& + O\Bigl(\sum_{j=1}^k \int_{\r^N} u_\varepsilon^{\frac{p-2}{2}} U_{\varepsilon, \xi_{j}}^{\frac{N-2}{2}} v_k \bar\varphi_{k} \, dx\Bigr)\\
= & \int_{\mathbb{R}^{N} \setminus B_{R_{0}}(0)} \Bigl( \varepsilon^{2} \nabla v_{k}\nabla \bar\varphi_{k} + V(x) v_{k}\bar\varphi_{k} - (p-1) (\sum_{j=1}^{k}  U_{\varepsilon, \xi_{j}})^{p-2} v_{k}\bar\varphi_{k} \Bigr) \, dx + O\bigl(k e^{-\frac{\sigma r}{\varepsilon}}\bigr)\\
= & k  \int_{\Omega_{1} \setminus B_{R_{0}}(0)} \Bigl( \varepsilon^{2} \nabla v_{k}\nabla \bar\varphi_{k} + V(x) v_{k}\bar\varphi_{k} - (p-1) (\sum_{j=1}^{k}  U_{\varepsilon, \xi_{j}})^{p-2} v_{k}\bar\varphi_{k} \Bigr) \, dx + O\bigl(k e^{-\frac{\sigma r}{\varepsilon}}\bigr),
\end{split}
\end{equation*}
which means
\[\begin{split} \int_{\Omega_1 \setminus B_{R_{0}}(0)}\bigl( \varepsilon^{2} \nabla v_{k}\nabla\bar \varphi_k+ V(x) v_{k}\bar \varphi_k - (p-1) (\sum_{i=1}^{k}  U_{\varepsilon, \xi_i})^{p-2} v_{k}\bar \varphi_k\bigr) \, dx  = & o(\frac{1}{\sqrt{k}})\|\bar\varphi_{k}\| + o_{k}(1)\\
=& o_{k}(1).
\end{split} \]
Then, for any $\varphi(x - \xi_{1})$, we have
\begin{align}\label{alpha}
& \int_{\Omega_1 \setminus B_{R_{0}}(0) }\bigl( \varepsilon^{2} \nabla v_{k}\nabla \varphi(x - \xi_{1})+ V(x) v_{k} \varphi(x - \xi_{1}) - (p-1) (\sum_{i=1}^{k}  U_{\varepsilon, \xi_i})^{p-2} v_{k} \varphi(x - \xi_{1})\bigr) \, dx  \nonumber \\
= & \frac{1}{k} \int_{\mathbb{R}^{N} \setminus B_{R_{0}}(0)} \bigl( \varepsilon^{2} \nabla v_{k}\nabla \sum_{j=1}^k \mathcal{R}_{\frac{2\pi(j-1)}{k}}\varphi(x - \xi_{1})+ V(x) v_{k} \sum_{j=1}^k \mathcal{R}_{\frac{2\pi(j-1)}{k}}\varphi(x - \xi_{1})   \nonumber \\
& - (p-1) (\sum_{i=1}^{k}  U_{\varepsilon, \xi_i})^{p-2} v_{k} \sum_{j=1}^k \mathcal{R}_{\frac{2\pi(j-1)}{k}} \varphi(x - \xi_{1}) \bigr) \, dx  \nonumber\\
= & \frac{1}{k}  \int_{\mathbb{R}^{N}} \bigl( \varepsilon^{2} \nabla v_{k}\nabla \bar\varphi_{k}+ V(x) v_{k}  \bar\varphi_{k}  - (p-1) (\sum_{i=1}^{k}  U_{\varepsilon, \xi_i})^{p-2} v_{k}  \bar\varphi_{k} - (p-1) u_{\varepsilon }^{p-2}v_{k} \bar\varphi_{k} \bigr) \, dx +o_{k}(1) \nonumber \\
& +\frac{a_{k}}{k}  \int_{\mathbb{R}^{N}} \bigl( \varepsilon^{2} \nabla v_{k}\nabla \sum_{j=1}^{k} Z_{j}+ V(x) v_{k}   \sum_{j=1}^{k} Z_{j}  - (p-1) (\sum_{i=1}^{k}  U_{\varepsilon, \xi_i})^{p-2} v_{k}   \sum_{j=1}^{k} Z_{j} \bigr) \, dx \nonumber \\
=& o_{k}(1)   +\alpha_{k} \langle \varphi(x- \xi_{1}), \sum_{j=1}^{k} Z_{j}\rangle,
 \end{align}
where
\[\alpha_{k} =  \| \sum_{j=1}^{k}Z_{j}\|^{-2} \int_{\mathbb{R}^{N}} \bigl( \varepsilon^{2} \nabla v_{k}\nabla \sum_{j=1}^{k} Z_{j}+ V(x) v_{k}   \sum_{j=1}^{k} Z_{j}  - (p-1) (\sum_{i=1}^{k}  U_{\varepsilon, \xi_i})^{p-2} v_{k}   \sum_{j=1}^{k} Z_{j} \bigr) \, dx.   \]

Next, we  estimate $\alpha_{k}$. Let $\eta \in C_{0}^{\infty}(B_{R}(\xi_{1}))$ be such that $\eta(x) = \eta(|x - \xi_{1}|)$, $\eta(x) =1$ in $B_{R/2}(\xi_{1})$, and $|\nabla \eta|\leq \frac{C}{R}$. Choosing $\varphi(x- \xi_{1}) = \eta \sum_{j=1}^{k}Z_{j}$ in \eqref{alpha}, we have
\[ \begin{split}& \alpha_{k} \langle\eta \sum_{j=1}^{k}Z_{j}, \sum_{j=1}^{k} Z_{j}\rangle \\
=&\int_{\Omega_1 \setminus B_{R_{0}}(0)}\bigl( \varepsilon^{2} \nabla v_{k}\nabla ( \eta \sum_{j=1}^{k}Z_{j})+ V(x) v_{k}  \eta \sum_{j=1}^{k}Z_{j} \\
& - (p-1) (\sum_{i=1}^{k}  U_{\varepsilon, \xi_i})^{p-2} v_{k} \eta \sum_{j=1}^{k}Z_{j}\bigr) \, dx + o_{k}(1)\\
= & \int_{\mathbb{R}^{N}} \Bigl( - \varepsilon^{2} \Delta ( \eta \sum_{j=1}^{k}Z_{j}) + V(x)   \eta \sum_{j=1}^{k}Z_{j} - (p-1) (\sum_{i=1}^{k}  U_{\varepsilon, \xi_i})^{p-2} \eta \sum_{j=1}^{k}Z_{j}\bigr) v_{k}\, dx + o_{k}(1)\\
= & o_{k}(1),
\end{split}\]
which means $\alpha_{k}=o_{k}(1)$. In views of \eqref{alpha}, there holds

\[ \int_{\Omega_1 \setminus B_{R_{0}}(0)}\bigl( \varepsilon^{2} \nabla v_{k}\nabla \varphi(x - \xi_{1})+ V(x) v_{k} \varphi(x - \xi_{1}) - (p-1) (\sum_{i=1}^{k}  U_{\varepsilon, \xi_i})^{p-2} v_{k} \varphi(x - \xi_{1})\bigr) \, dx = o_{k}(1).\]
namely,
\begin{equation*}
 \int_{\{ \Omega_1 \setminus B_{R_{0}}(0)- \xi_1\}}\bigl( \varepsilon^{2} \nabla  \bar v_{k}\nabla\varphi + V(x + \xi_1) \bar v_{k}\varphi - (p-1)   U_{\varepsilon}^{p-2} \bar v_{k}\varphi \bigr) \, dx = o_{k}(1),
\end{equation*}
for any $\varphi \in C_0^\infty(\r^N) $,which is even in $x_i$, $i=2, 3, \cdots, N$. Letting  $k \to \infty$,  we have
\begin{equation}\label{eq:1.1-3} \int_{\r^N} \bigl( \varepsilon^{2}  \nabla v \nabla\varphi +  v\varphi - (p-1)   U_{\varepsilon}^{p-2} v\varphi \bigr) \, dx = 0.  \end{equation}
 Since $v$ is even in $x_i$, $i=2, 3, \cdots, N$,  for any $\varphi \in C_0^\infty(\r^N)$, which is odd in $x_i$,   $i=2, 3, \cdots, N$, \eqref{eq:1.1-3} holds. Therefore,  \eqref{eq:1.1-3} holds for all $\varphi \in C_0^\infty(\r^N)$.

On the other hand, by the non-degeneracy of $U_{\varepsilon}$, it follows from \eqref{eq:1.1-3} that
\[v = \sum_{i=1}^{N}c_{i} \frac{\partial U_{\varepsilon}}{\partial x_i}.\]
Since $v$ is even in $x_{i}$, $i=2, 3, \cdots, N$, we have $v= c_{1} \frac{\partial U_{\varepsilon}}{\partial x_1}$.
By \eqref{eq:1-2-1},  we deduce that $c_{1} = 0$, that is, $v=0$, which leads to
\begin{equation}\label{eq:1.1-4}\int_{B_R(\xi_1)} v_k^2 \, dx = o(1). \end{equation}

Furthermore, the equation  $\|v_{k}\| = \sqrt{k}$ implies that  $\frac{1}{\sqrt{k}} v_{k}$ is bounded in $H^{1}(\r^{N})$. So we may assume that
\[\frac{1}{\sqrt{k}} v_{k} \rightharpoonup v_{0}, \text{ in } H^{1}(\r^{N}) \]
and
\[ \frac{1}{\sqrt{k}} v_{k} \to v_{0}, \text{ in } L_{loc}^{2}(\r^{N}). \]

Choosing $\psi \in C_{0}^{\infty}(\r^{N})$ with $\|\psi\| = 1$ in  \eqref{eq:1.1-1}, we have for small positive constant $\tau$ that
\begin{equation}\label{eq:1-3-1}\begin{split}
o(1) = & \int_{\r^{N}}\bigl( \varepsilon^{2} \nabla \bigl( \frac{1}{\sqrt{k}}  v_{k} \bigr)\nabla \psi + V(x)  \bigl( \frac{1}{\sqrt{k}}  v_{k} \bigr) \psi - (p-1) (\sum_{j=1}^{k}  U_{\varepsilon, \xi_j})^{p-2}  \bigl( \frac{1}{\sqrt{k}}  v_{k} \bigr) \psi \bigr) \, dx\\
& - (p-1) \int_{\r^N} u_\varepsilon^{p-2}  \bigl( \frac{1}{\sqrt{k}}  v_{k} \bigr) \psi \, dx  + O\Bigl(\sum_{i=1}^k \int_{\r^N} u_\varepsilon^{\frac{p-2}{2}} U_{\varepsilon, \xi_j}^{\frac{N-2}{2}}  \bigl( \frac{1}{\sqrt{k}}  v_{k} \bigr)  \psi\Bigr)\\
=   & \int_{\r^{N}}\bigl( \varepsilon^{2} \nabla \bigl( \frac{1}{\sqrt{k}}  v_{k} \bigr)\nabla \psi + V(x)  \bigl( \frac{1}{\sqrt{k}}  v_{k} \bigr) \psi  - (p-1) \int_{\r^N} u_\varepsilon^{p-2}  \bigl( \frac{1}{\sqrt{k}}  v_{k} \bigr) \psi \, dx  + O\Bigl(e^{-\frac{\tau r}{\varepsilon}}\Bigr).\\
\end{split}
\end{equation}
Let $k \to +\infty$, we find that $v_{0}$ satisfies
\[ - \varepsilon^{2} \Delta v_{0} + V(x) v_{0} - (p-1) u_{\varepsilon}^{p-2} v_{0} = 0.\]
It follows from the non-degeneracy of $u_{\varepsilon}$ in Lemma \ref{non} that $v_{0} = 0$. Thus,
\begin{equation}\label{eq:1-3-2}\int_{B_{R}(0)} v_{k}^{2} \, dx = o(k). \end{equation}
We deduce from \eqref{eq:1.1-4} and \eqref{eq:1-3-2} that
\begin{equation}\label{eq:1-3-3}\begin{split}
\int_{\r^{N}} W_{\varepsilon, k}^{p-2} v_{k}^{2} = & O\Bigl( \int_{\r^{N}} u_{\varepsilon}^{p-2} v_{k}^{2} \, dx \Bigr) + O\Bigl( \sum_{i=1}^{k}\int_{\r^{N}} U_{\varepsilon, \xi_j}^{p-2} v^{2}_{k} \, dx  \Bigr) \\
= & o(k),
\end{split}
\end{equation}
which contradicts with \eqref{eq:1.1-5}, that is,
\[\begin{split} o(k) = &   \int_{\r^{N}}\bigl( \varepsilon^{2} |\nabla v_{k}|^{2} + V(x) v_{k}^{2} - (p-1) W_{\varepsilon, k}^{p-2} v_{k}^{2} \bigr) dx\\
= & k + o(k).
\end{split} \]

\end{proof}

\bigskip

Next, we have the estimate for $l_{k}$.
\begin{lemma}\label{lm:2-2} For $r \in S_{k}$ and $k$ large enough,
\begin{equation}
\|l_{k}\| =  O\Bigl(  \frac{ k}{r^{m}}  \Bigr) + O \Bigl(k e^{-\frac{\tau r}{\varepsilon}} \Bigr) +  O \Bigl(k^{\frac{1}{2}} \Bigl( (r/k)^{-\frac{(N-1)}{2}} e^{-\frac{2 \pi r}{\varepsilon k }} \Bigr)^{ \min\{\frac{p-1}{2} -\tau, 1\}}\Bigr),
\end{equation}
where $\tau> 0$ is a small constant.
\end{lemma}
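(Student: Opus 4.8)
The plan is to estimate $\|l_k\|$ via the Riesz representation, so that $\|l_k\|=\sup\{\langle l_k,\varphi\rangle:\varphi\in H^1(\mathbb{R}^N),\ \|\varphi\|\le1\}$, and to treat the two groups of terms in \eqref{eq:1-1-3} separately. I will use throughout that $\inf_{\mathbb{R}^N}V>0$, so that $\|\varphi\|_{L^2}\le C\|\varphi\|$, together with the decay $U_\varepsilon(x)\sim(|x|/\varepsilon)^{-(N-1)/2}e^{-|x|/\varepsilon}$ recalled after \eqref{eq:1.3} and the fact that, by the angular definition of $\Omega_1$, the bump $U_{\varepsilon,\xi_1}$ dominates every other $U_{\varepsilon,\xi_i}$ on $\Omega_1$.

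For the linear part $\sum_j\int(1-V)U_{\varepsilon,\xi_j}\varphi\,dx$ I would use $(V_2^{\pm})$ in the form $|1-V(x)|\le C|x|^{-m}$ for $|x|\ge R_0$. Since $U_{\varepsilon,\xi_j}$ concentrates at $\xi_j$ with $|\xi_j|=r$ and decays exponentially, the bulk of its mass lies in $\{|x|\ge r/2\}$, where $|1-V|\le Cr^{-m}$, while the exponentially small tail entering $\{|x|<R_0\}$ is negligible; hence $\|(1-V)U_{\varepsilon,\xi_j}\|_{L^2}\le Cr^{-m}$. Cauchy--Schwarz gives $|\int(1-V)U_{\varepsilon,\xi_j}\varphi\,dx|\le Cr^{-m}\|\varphi\|$, and summing the $k$ identical contributions yields the first term $O(k/r^m)$.

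For the nonlinear part I would expand $W_{\varepsilon,k}^{p-1}-u_\varepsilon^{p-1}-\sum_jU_{\varepsilon,\xi_j}^{p-1}$ using the sharp elementary inequality $0\le(a+b)^{p-1}-a^{p-1}-b^{p-1}\le C\max(a,b)^{p-2}\min(a,b)$, applied on $\Omega_1$ with $a=U_{\varepsilon,\xi_1}$ and $b=u_\varepsilon+\sum_{i\ne1}U_{\varepsilon,\xi_i}$. Since $U_{\varepsilon,\xi_1}$ is the dominant bump on the bulk of $\Omega_1$, the leading contribution there is $\le CU_{\varepsilon,\xi_1}^{p-2}\big(u_\varepsilon+\sum_{i\ne1}U_{\varepsilon,\xi_i}\big)$, the residual interactions among the farther-separated bumps being of the same or higher order. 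Exploiting the symmetry of $H_s$ and the disjointness of the $\Omega_j$, a Cauchy--Schwarz over $j$ produces the factor $k^{1/2}$,
\[
\Big|\int_{\mathbb{R}^N}\big(W_{\varepsilon,k}^{p-1}-u_\varepsilon^{p-1}-\sum_{j=1}^kU_{\varepsilon,\xi_j}^{p-1}\big)\varphi\,dx\Big|\le Ck^{1/2}\Big\|U_{\varepsilon,\xi_1}^{p-2}\sum_{i\ne1}U_{\varepsilon,\xi_i}\Big\|_{L^2(\Omega_1)}+O\big(ke^{-\tau r/\varepsilon}\big),
\]
where the cross term $U_{\varepsilon,\xi_1}^{p-2}u_\varepsilon$ is exponentially small, $u_\varepsilon$ concentrating at the origin while $|\xi_1|=r$, and accounts for the second term $O(ke^{-\tau r/\varepsilon})$.

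The genuine difficulty is the peak-to-peak interaction $\|U_{\varepsilon,\xi_1}^{p-2}\sum_{i\ne1}U_{\varepsilon,\xi_i}\|_{L^2(\Omega_1)}$, dominated by the nearest neighbours $i=2,k$ at distance $d=|\xi_1-\xi_2|=2r\sin(\pi/k)\approx2\pi r/k$. It is essential that the integral is localised to $\Omega_1$: with $Q:=U_\varepsilon(d)\sim(r/k)^{-(N-1)/2}e^{-2\pi r/(\varepsilon k)}$, the integrand $U_{\varepsilon,\xi_1}^{2(p-2)}U_{\varepsilon,\xi_2}^2$ of the squared $L^2$-norm has value $\sim Q^2$ at $\xi_1$ and $\sim Q^{p-1}$ on the bisector bounding $\Omega_1$. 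For $p\ge3$ the peak dominates and the norm is $\sim Q$; for $2<p<3$ the bisector dominates and the norm is $\sim Q^{(p-1)/2}$, the polynomial-in-$(r/k)$ slab volume being absorbed into the loss $Q^{-\tau}$. This gives precisely the third term $k^{1/2}Q^{\min\{(p-1)/2-\tau,1\}}$. Finally one checks that the tail $\sum_{i\ge3}U_\varepsilon(|\xi_1-\xi_i|)$ is controlled by the nearest-neighbour term, using $|\xi_1-\xi_i|\gtrsim\frac{i-1}{k}r$ and the near-geometric decay of the exponentials, so that the infinite interaction series converges without degrading the exponent. The $2<p<3$ bisector estimate, together with this summation over the $k\to\infty$ peaks, is the main obstacle.
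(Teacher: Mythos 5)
Your proposal follows essentially the same route as the paper: the linear potential term is handled by Cauchy--Schwarz with $|1-V|\le Cr^{-m}$ near the ring to give $O(k/r^{m})$, the $u_{\varepsilon}$--bump cross terms are split off as $O(ke^{-\tau r/\varepsilon})$, and the bump--bump interaction is reduced to $\Omega_{1}$ and controlled via the bound $\|U_{\varepsilon,\xi_1}^{p-2}U_{\varepsilon,\xi_j}\|_{L^{2}(\Omega_{1})}\le CU_{\varepsilon}^{\min\{\frac{p-1}{2}-\tau,1\}}(|\xi_{j}-\xi_{1}|)$, with the geometric summation over $j$ and a Cauchy--Schwarz over the sectors producing the factor $k^{1/2}$, exactly as in the paper. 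Your peak-versus-bisector explanation of the exponent $\min\{\frac{p-1}{2}-\tau,1\}$ is a correct (and more explicit) justification of a step the paper only asserts.
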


\begin{proof} For any $\varphi \in H^{1}(\r^{N})$ and a small enough constant $\delta >0$, by symmetry, we have
\begin{equation}\label{eq:1-4-1}\begin{split}
 \sum_{j=1}^{k}\int_{\r^{N}} \bigl(1 - V(x)\bigr) U_{\varepsilon, \xi_j} \varphi \, dx = & \sum_{j=1}^{k} \int_{\r^{N}} \bigl(1 - V(x+\xi_{j})\bigr) U_{\varepsilon} \varphi(x + \xi_{j}) \, dx\\
=  & O\Bigl(  \frac{ k}{r^{m}}  \Bigr)\|\varphi\|  = O\Bigl(    \frac{C}{k^{m-1} (\ln k)^{m}} \Bigr) \|\varphi\|.
 \end{split}
\end{equation}
On the other hand,  for small $\tau > 0$, we have
\begin{align*}\label{eq:1-4-2}
& | \int_{\r^{N}} \Bigl( W_{\varepsilon, k}^{p-1} - u_{\varepsilon}^{p-1} -\sum_{j=1}^{k} U_{\varepsilon, \xi_j}^{p-1} \Bigr) \varphi\, dx| \\
\leq & C\Bigl\{ \sum_{j=1}^{k} | \int_{\r^{N}} u_{\varepsilon}^{p-2} U_{\varepsilon, \xi_j} \varphi \, dx | +   | \int_{\r^{N}} \bigl( \sum_{j=1}^{k} U_{\varepsilon, \xi_j} \bigr)^{p-2} u_{\varepsilon} \varphi \, dx | \Bigr\} \\
&+  |\int_{\r^{N}} \bigl[ \bigl( \sum_{j=1}^{k} U_{\varepsilon, \xi_j} \bigr)^{p-1} - \sum_{j=1}^{k} U_{\varepsilon, \xi_j}^{p-1} \bigr] \varphi \, dx | \\
\leq & C k e^{-\frac{\tau r}{\varepsilon}} \|\varphi\|+  k|\int_{\Omega_{1}}  \bigl[ \bigl( \sum_{j=1}^{k} U_{\varepsilon, \xi_j} \bigr)^{p-1} - \sum_{j=1}^{k} U_{\varepsilon, \xi_j}^{p-1} \bigr] \varphi \, dx | \\
\leq & C k e^{-\frac{\tau r}{\varepsilon}} \|\varphi\| +C k  \sum_{j=2}^{k}\int_{\Omega_{1}} U_{\varepsilon, \xi_1}^{p-2}U_{\varepsilon, \xi_j} |\varphi| \, dx \\
\leq  &  C  k e^{-\frac{\tau r}{\varepsilon}}\|\varphi\|  + C k \sum_{j=2}^{k}U_{\varepsilon}^{\min\{\frac{p-1}{2}-\tau, 1\}}(|\xi_{j} - \xi_{1}|) \Bigl(\int_{\Omega_{1}} |\varphi|^{2}\Bigr)^{\frac{1}{2}} \\
\leq & C  k e^{-\frac{\tau r}{\varepsilon}}\|\varphi\|  + C k^{\frac{1}{2}} \Bigl( (r/k)^{-\frac{(N-1)}{2}} e^{-\frac{2 \pi r}{\varepsilon k }} \Bigr)^{ \min\{\frac{p-1}{2} -\tau, 1\}} \|\varphi\|.
\end{align*}
Thus,
\[\|l_{k}\| = O\Bigl(  \frac{ k}{r^{m}}  \Bigr) + O \Bigl(k e^{-\frac{(1-\tau) r}{\varepsilon}} \Bigr) +  O \Bigl(k^{\frac{1}{2}} \Bigl( (r/k)^{-\frac{(N-1)}{2}} e^{-\frac{2 \pi r}{\varepsilon k }} \Bigr)^{ \min\{\frac{p-1}{2} -\tau, 1\}}\Bigr).\]
\end{proof}

\bigskip

It is standard to  verify  the following result.

\begin{lemma}\label{lm:2-3-1} It holds
\[ \|R^{i}_{k}(\omega)\| = O\Bigl(\|\omega\|^{\min\{ p-1, 2\}-i}\Bigr), \text{ for } i =0, 1.  \]
\end{lemma}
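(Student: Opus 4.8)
The plan is to derive both bounds from elementary pointwise expansion inequalities for the nonlinearity $t \mapsto |t|^{p-2}t$, combined with H\"older's inequality and the Sobolev embedding $H^{1}(\r^{N}) \hookrightarrow L^{q}(\r^{N})$, which is valid for every $q \in [2, \frac{2N}{N-2}]$ and in particular for $q = p$. I read $R_{k}^{0}(\omega) = R_{k}(\omega)$, whose weak form is \eqref{eq:1-1-4}, and $R_{k}^{1}(\omega) = DR_{k}(\omega)$ for the Fr\'echet derivative, whose action on an increment $\eta$ is
\[
\langle DR_{k}(\omega)[\eta], \varphi \rangle = (p-1) \int_{\r^{N}} \Bigl( \bigl( W_{\varepsilon, k} + \omega \bigr)^{p-2} - W_{\varepsilon, k}^{p-2} \Bigr) \eta \, \varphi \, dx .
\]
Since the estimate is stated in the dual (operator) norm, in each case I would take the supremum over $\varphi$ (and, for $i=1$, also over $\eta$) of unit $H^{1}$-norm, reducing matters to bounding the corresponding integral.

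The analytic input is the two-regime inequality: there is $C > 0$ such that for all $a, b \in \r$,
\[
\Bigl| |a+b|^{p-2}(a+b) - |a|^{p-2}a - (p-1)|a|^{p-2} b \Bigr| \leq C \begin{cases} |b|^{p-1}, & 2 < p \leq 3, \\ |a|^{p-3} b^{2} + |b|^{p-1}, & p > 3, \end{cases}
\]
together with its first-order counterpart
\[
\Bigl| |a+b|^{p-2} - |a|^{p-2} \Bigr| \leq C \begin{cases} |b|^{p-2}, & 2 < p \leq 3, \\ |a|^{p-3}|b| + |b|^{p-2}, & p > 3 . \end{cases}
\]
Both follow from writing the remainder as an integral of $g'(t)=(p-1)|t|^{p-2}$ and using that $g'$ is H\"older continuous of order $p-2$ when $2<p\le 3$ and $C^{1}$ when $p>3$. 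Substituting $a = W_{\varepsilon, k}$ and $b = \omega$ (respectively the increment) then reduces the estimates for $R_{k}^{0}$ and $R_{k}^{1}$ to integrating the displayed right-hand sides against $|\varphi|$ (respectively $|\eta \varphi|$).

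For $i = 0$ in the regime $2 < p \leq 3$, H\"older with exponents $(\frac{p}{p-1}, p)$ and the embedding give $\int_{\r^{N}} |\omega|^{p-1}|\varphi| \, dx \leq \|\omega\|_{L^{p}}^{p-1}\|\varphi\|_{L^{p}} \leq C\|\omega\|^{p-1}\|\varphi\|$, so $\|R_{k}^{0}(\omega)\| = O(\|\omega\|^{p-1})$ with $\min\{p-1,2\} = p-1$. For $p > 3$ the extra weighted term is treated by H\"older with exponents $(\frac{p}{p-3}, \frac{p}{2}, p)$, namely $\int_{\r^{N}} W_{\varepsilon, k}^{p-3}\omega^{2}|\varphi| \, dx \leq \|W_{\varepsilon, k}\|_{L^{p}}^{p-3}\|\omega\|_{L^{p}}^{2}\|\varphi\|_{L^{p}} = O(\|\omega\|^{2}\|\varphi\|)$; since $\|\omega\|$ is small and $p - 1 > 2$, this $O(\|\omega\|^{2})$ contribution dominates the $O(\|\omega\|^{p-1})$ one, giving the exponent $\min\{p-1,2\}=2$. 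The case $i=1$ is identical after inserting the factor $\eta$ and using the first-order inequality: one gets $O(\|\omega\|^{p-2})$ when $2 < p \leq 3$ and $O(\|\omega\|)$ when $p > 3$, that is $O(\|\omega\|^{\min\{p-1,2\}-1})$ in both cases.

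The computation itself is routine; the only genuine point is the case distinction $2 < p \leq 3$ versus $p > 3$, which is precisely what produces the $\min\{p-1,2\}$ in the exponent, and also the reason the two pointwise inequalities must be split. The place to keep track of carefully is the weight $W_{\varepsilon, k}^{p-3}$ appearing when $p > 3$: it must be absorbed through the $L^{p/(p-3)}$-norm of $W_{\varepsilon, k}$ before the embedding is applied, and this is the step where any dependence on the number of peaks $k$ would enter through $\|W_{\varepsilon, k}\|_{L^{p}}$. No smallness of $\|\omega\|$ is needed for the pointwise inequalities themselves; it is only used to select the sharper exponent $2$ in the regime $p > 3$.
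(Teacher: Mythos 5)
The paper offers no argument for this lemma at all (it is dismissed as ``standard to verify''), so there is nothing to compare your proof against; it has to stand on its own. Your overall strategy --- the two-regime pointwise Taylor inequalities for $t\mapsto |t|^{p-2}t$, followed by H\"older and the subcritical Sobolev embedding, with $R_k^1$ read as the Fr\'echet derivative --- is exactly the standard route, and the case $2<p\le 3$ is complete and correct as you present it.

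There is, however, a genuine gap in the regime $p>3$. You estimate the weighted term by
$\int_{\r^N} W_{\varepsilon,k}^{p-3}\omega^2|\varphi|\,dx \le \|W_{\varepsilon,k}\|_{L^p}^{p-3}\|\omega\|_{L^p}^2\|\varphi\|_{L^p}$,
but $\|W_{\varepsilon,k}\|_{L^p}^p$ grows like $k$ (the $k$ bumps are essentially disjoint in $L^p$), so this produces a constant of order $k^{(p-3)/p}$ rather than an $O(1)$ constant. This is not cosmetic: the lemma is invoked in Lemma \ref{lm:2-3}, where one needs $\|R_k(\omega)\|\le C\|\omega\|^{2}$ with $C$ \emph{uniform in} $k$, because $\|\omega_k\|$ itself decays only polynomially in $k$ (like $k^{1-m}(\ln k)^{-m}$ with $m>1$), and a stray factor $k^{(p-3)/p}$ can destroy both the self-mapping estimate \eqref{eq:1-4-43} and the contraction estimate \eqref{eq:1-4-44} unless $m$ is additionally restricted. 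You flag this as ``the place to keep track of carefully'' but do not resolve it. The repair is easy and you should make it explicit: $W_{\varepsilon,k}$ is bounded in $L^\infty$ uniformly in $k$, because $u_\varepsilon$ is bounded and, by the separation of the points $\xi_j$ and the exponential decay of $U_\varepsilon$ (exactly the computation in \eqref{sum}), $\sum_{j}U_{\varepsilon,\xi_j}\le C$ on each sector $\Omega_i$. Hence
$\int_{\r^N} W_{\varepsilon,k}^{p-3}\omega^2|\varphi|\,dx \le C\|\omega\|_{L^3}^2\|\varphi\|_{L^3}\le C\|\omega\|^2\|\varphi\|$,
where the embedding $H^1(\r^N)\hookrightarrow L^3(\r^N)$ is legitimate because $p>3$ together with $p<\frac{2N}{N-2}$ forces $N\le 5$. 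The same correction applies verbatim to the $i=1$ case. With that single substitution your proof is complete.
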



\begin{lemma}\label{lm:2-3} There exists $k_{0}> 0$, for each $k > k_{0}$,  one has a map  $\omega_{k} \in E_{k}$ from $S_{k}$ to $H_{s}$ satisfying
\begin{equation}\label{eq:1-4-40} L_{k} \omega = l_{k} + R_{k}(\omega) + a_{k} \sum_{j=1}^{k}Z_{j}.
 \end{equation}
Moreover,
\[\|\omega_{k}\| \leq C \Bigl( \frac{ k}{r^{m}}   + k^{\frac{1}{2}} \Bigl( (r/k)^{-\frac{(N-1)}{2}} e^{-\frac{2 \pi r}{\varepsilon k }} \Bigr)^{ \min\{\frac{p-1}{2} -\tau, 1\}}\Bigr). \]
\end{lemma}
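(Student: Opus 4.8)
The plan is to recast the equation \eqref{eq:1-4-40} as a fixed-point problem on $E_k$ and to solve it by the contraction mapping principle, using the invertibility supplied by Lemma \ref{lm1} together with the quantitative bounds of Lemmas \ref{lm:2-2} and \ref{lm:2-3-1}. First I would note that, for $\omega \in E_k$, equation \eqref{eq:1-4-40} holds for some scalar $a_k$ exactly when the projected identity
\[ P_k L_k \omega = P_k l_k + P_k R_k(\omega) \]
is satisfied in $E_k$: subtracting the two sides shows that $L_k\omega - l_k - R_k(\omega)$ lies in $\operatorname{span}\{\sum_{j=1}^k Z_j\}$, which is precisely the correction term $a_k\sum_{j=1}^k Z_j$. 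Since $P_k$, $L_k$, $l_k$ and $R_k$ all respect the symmetries defining $H_s$, the problem stays within $E_k \subset H_s$. By Lemma \ref{lm1}, for $0 < \varepsilon < \varepsilon_0$, $r \in S_k$ and $k$ large, the operator $P_k L_k$ is invertible on $E_k$ with $\|(P_k L_k)^{-1}\| \le \rho^{-1}$ uniformly in $k$, so the projected equation is equivalent to the fixed-point equation $\omega = T(\omega)$ with
\[ T(\omega) := (P_k L_k)^{-1}\bigl(P_k l_k + P_k R_k(\omega)\bigr), \qquad \omega \in E_k. \]

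I would then run the contraction argument on a ball $B_M = \{\omega \in E_k : \|\omega\| \le M\}$. Because $P_k$ is a bounded projection onto $E_k$ uniformly in $k$, Lemma \ref{lm:2-2} gives $\|P_k l_k\| \le C\|l_k\|$, and the $i=0$ case of Lemma \ref{lm:2-3-1} gives $\|R_k(\omega)\| = O(\|\omega\|^{\min\{p-1,2\}})$. Choosing $M$ to be a fixed large multiple of $\|l_k\|$, for $\|\omega\| \le M$ one obtains
\[ \|T(\omega)\| \le \rho^{-1}\bigl(C\|l_k\| + C\,M^{\min\{p-1,2\}}\bigr) \le \tfrac12 M + C' M^{\min\{p-1,2\}}. \]
For $r \in S_k$ the bound of Lemma \ref{lm:2-2} forces $M = O(\|l_k\|) \to 0$ as $k \to \infty$ (with $r \sim \frac{\varepsilon m}{2\pi} k\ln k$ each of its three terms tends to $0$), and since $p>2$ the term $M^{\min\{p-1,2\}}$ is of strictly higher order; hence $T(B_M) \subset B_M$ for $k$ large.

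For the contraction I would invoke the $i=1$ bound of Lemma \ref{lm:2-3-1}, $\|R_k^1(\omega)\| = O(\|\omega\|^{\min\{p-1,2\}-1})$, and the elementary estimate $\|R_k(\omega_1)-R_k(\omega_2)\| \le \sup_{t\in[0,1]}\|R_k^1(t\omega_1+(1-t)\omega_2)\|\,\|\omega_1-\omega_2\|$ to obtain, on $B_M$,
\[ \|T(\omega_1)-T(\omega_2)\| \le \rho^{-1} C\, M^{\min\{p-1,2\}-1}\,\|\omega_1-\omega_2\|. \]
As $\min\{p-1,2\}-1 > 0$ and $M \to 0$, the Lipschitz constant is $<1$ for $k$ large, so the Banach fixed-point theorem yields a unique $\omega_k \in B_M$ solving \eqref{eq:1-4-40}; uniqueness together with continuous dependence of the data on $r$ makes $\omega_k$ a (continuous) map from $S_k$ into $H_s$. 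The bound $\|\omega_k\| \le M = O(\|l_k\|)$ and Lemma \ref{lm:2-2} then give the stated estimate, after observing that the middle term $O(ke^{-\tau r/\varepsilon})$ is absorbed by $O(k/r^m)$ for $r \in S_k$.

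The abstract scheme is routine; the genuine difficulty, already discharged by the preceding lemmas, is the uniformity in the number of peaks $k$ — of $\rho$, of $\|P_k\|$, and of the constants in the $R_k$ bounds — and the requirement $M = O(\|l_k\|) \to 0$, which holds only inside the window $S_k$ where the potential term $k/r^m$ balances the peak interactions. The most delicate case is $2 < p < 3$, where $\min\{p-1,2\} = p-1 < 2$ and $R_k$ is merely H\"older-differentiable; there both the self-map and the contraction rest on $M^{\,p-2}\to 0$, which is again guaranteed precisely by $\|l_k\|\to 0$ on $S_k$.
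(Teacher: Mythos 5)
Your proposal is correct and follows essentially the same route as the paper: reduce \eqref{eq:1-4-40} to the projected equation $P_kL_k\omega = P_kl_k + P_kR_k(\omega)$, invert $P_kL_k$ via Lemma \ref{lm1}, and apply the contraction mapping principle on a small ball whose radius is controlled by $\|l_k\|$ from Lemma \ref{lm:2-2}, with the nonlinear term handled by Lemma \ref{lm:2-3-1}. The only cosmetic difference is that the paper takes the ball slightly larger than $\|l_k\|$ (losing a small power $\tau_1$) whereas you take a fixed multiple of $\|l_k\|$; both choices close the argument.
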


\begin{proof}Solving the equation \eqref{eq:1-4-40} is equivalent to solving the following equation
\begin{equation}\label{eq:1-4-41} P_{k}L_{k} \omega = P_{k}l_{k} + P_{k}R_{k}(\omega). \end{equation}
\eqref{eq:1-4-41} is well defined, since $u_{\varepsilon}$ is radially symmetric when $V(x)$ is radial, which implies that $P_{k}l_{k}$ and $P_{k}R_{k}(\omega)$ make sense.
By Lemma \ref{lm1}, $P_{k}L_{k}$ is invertible in $E_{k}$. Thus, \eqref{eq:1-4-41} can be rewritten as
\[\omega = A(\omega) : =(P_{k}L_{k}) ^{-1} P_{k} l_{k} +(P_{k}L_{k}) ^{-1} P_{k}R_{k}(\omega). \]
It follows from Lemma \ref{lm1} and Lemma \ref{lm:2-2} that
\[ \| (P_{k}L_{k}) ^{-1} P_{k} l_{k}  \| \leq  C \| P_{k}  l_{k}\| \leq C \|  l_{k}\|   \leq  C \Bigl( \frac{ k}{r^{m}}   + k^{\frac{1}{2}} \Bigl(  (r/k)^{-\frac{(N-1)}{2}} e^{-\frac{2 \pi r}{\varepsilon k }} \Bigr)^{ \min\{\frac{p-1}{2} -\tau, 1\}}\Bigr). \]

Next, we will apply the contraction mapping theorem to find the unique $\omega $ in a ball. Let
\[B:= \Bigl\{ \omega \in E_{k}:  \|\omega\| \leq C \Bigl( \frac{ k}{r^{m-\tau_{1}}}   + k^{\frac{1}{2}} \Bigl(  (r/k)^{-\frac{(N-1)}{2}} e^{-\frac{2 \pi r}{\varepsilon k }} \Bigr)^{ \min\{\frac{p-1}{2} -\tau, 1\}(1-\tau_{1})}\Bigr)\Bigr\}, \]
where $\tau_{1}> 0$ is a fixed small constant. Hence, it is sufficient to verify that  $A$ maps $B$ to $B$ and $A$ is a contraction map.

$(i)$ $A$ maps $B$ to $B$. In fact, for any $\omega \in B$, it follows from Lemmas \ref{lm:2-2} and \ref{lm:2-3-1} that
\begin{equation}\label{eq:1-4-43}\begin{split}
\|A\omega\| \leq & C \|P_{k}l_{k}\| + C \|P_{k}R_{k}(\omega)\|\\
	\leq & C \|l_{k}\| + C \|R_{k}(\omega)\| \\
\leq & C  \Bigl( \frac{ k}{r^{m}}   + k^{\frac{1}{2}} \Bigl( (r/k)^{-\frac{(N-1)}{2}} e^{-\frac{2 \pi r}{\varepsilon k }} \Bigr)^{ \min\{\frac{p}{2} -\tau, 1\}}\Bigr) + C \|\omega\|^{\min\{p-1, 2\}} \\
\leq & C \Bigl( \frac{ k}{r^{m-\tau_{1}}}   + k^{\frac{1}{2}} \Bigl( (r/k)^{-\frac{(N-1)}{2}} e^{-\frac{2 \pi r}{\varepsilon k }} \Bigr)^{ \min\{\frac{p}{2} -\tau, 1\}(1-\tau_{1})}\Bigr).
\end{split}
\end{equation}

$(ii)$ $A$ is a contraction map. For any $\omega_{1}, \omega_{2} \in B$, one has
\begin{equation}\label{eq:1-4-44}\begin{split}
\|A(\omega_{1}) - A(\omega_{2})\| = &  \|(P_{k}L_{k}) ^{-1}   P_{k} R_{k}(\omega_{1}) - (P_{k}L_{k}) ^{-1} P_{k}R_{k}(\omega_{2}) \|  \\
\leq & C \|R_{k}(\omega_{1}) -R_{k}(\omega_{2}) \| \\
\leq & C \|R^{'}(t\omega_{1} +(1-t) \omega_{2})\| \|\omega_{1} - \omega_{2}\|\\
\leq & C (\| \omega_{1}\| +\| \omega_{2}\|  )^{\min\{p-2, 1\}} \|\omega_{1} - \omega_{2}\|\\
\leq & \frac12  \|\omega_{1} - \omega_{2}\|.
\end{split}
\end{equation}
The assertion follows.

\end{proof}

\bigskip

\section{Energy estimate and the proof of Main Theorems }

\bigskip

In this section, we will give the proof of Theorem \ref{thm1}.
First, we estimate  $I(W_{\varepsilon,k})$.
Since
\[ W_{\varepsilon, k} = u_{\varepsilon} + \sum_{j=1}^{k}U_{\varepsilon, \xi_j},\]
we deduce
\begin{equation}\label{eq2.0}
\begin{split}
I(W_{\varepsilon, k}) = & I(u_{\varepsilon}) + k  \alpha +\sum_{j=1}^{k} \int_{\r^{N}} \Bigl( \varepsilon^{2} \nabla u_{\varepsilon} \nabla U_{\varepsilon, \xi_j} + V(x) u_{\varepsilon} U_{\varepsilon, \xi_j}\Bigr)\, dx + \frac12 \sum_{i, j =1}^{k} \int_{\r^{N}}  \bigl(V(x) - 1 \bigr)  U_{\varepsilon, \xi_i} U_{\varepsilon, \xi_j} \, dx  \\
& + \sum_{i<j} \int_{\r^{N}} \Bigl( \varepsilon^{2} \nabla U_{\varepsilon, \xi_i} \nabla U_{\varepsilon, \xi_j}+U_{\varepsilon, \xi_i} U_{\varepsilon, \xi_j} \Bigr)\, dx - \frac{1}{p} \int_{\r^{N}} \bigl[ |W_{\varepsilon,k}|^{p} - |u_{\varepsilon}|^{p} - \sum_{j=1}^{k} U_{\varepsilon, \xi_j}^{p}\bigr] \, dx   \\
= & I(u_{\varepsilon}) + k  \alpha +\sum_{j=1}^{k} \int_{\r^{N}} u_{\varepsilon}^{p-1} U_{\varepsilon, \xi_j} \, dx + \frac12 \sum_{i, j =1}^{k}\int_{\r^{N}}  \bigl(V(x) - 1 \bigr)  U_{\varepsilon, \xi_i} U_{\varepsilon, \xi_j} \, dx\\
& +   \sum_{i<j} \int_{\r^{N}}  U_{\varepsilon, \xi_i}^{p-1} U_{\varepsilon, \xi_j} \, dx   - \frac{1}{p} \int_{\r^{N}} \bigl[ |W_{\varepsilon,k}|^{p} - |u_{\varepsilon}|^{p} - \sum_{j=1}^{k} U_{\varepsilon, \xi_j}^{p}\bigr] \, dx \\
= &  I(u_{\varepsilon}) + k \alpha + \frac12 \sum_{i, j =1}^{k}\int_{\r^{N}}  \bigl(V(x) - 1\bigr)  U_{\varepsilon, \xi_i} U_{\varepsilon, \xi_j} \, dx \\
&  - \frac{1}{p} \int_{\r^{N}} \bigl[ |W_{\varepsilon,k}|^{p} - |u_{\varepsilon}|^{p} - \sum_{j=1}^{k} U_{\varepsilon, \xi_j}^{p} - p \sum_{j=1}^{k} u_{\varepsilon}^{p-1} U_{\varepsilon, \xi_j} - p  \sum_{i<j}  U_{\varepsilon, \xi_i}^{p-1}U_{\varepsilon, \xi_j} \bigr] \, dx,
\end{split}
\end{equation}
where $ \alpha= \bigl( \frac12 -\frac1p\bigr) \int_{\r^{N}} U_{\varepsilon}^{p}(x)dx$.

\bigskip
Now, we need to estimate each term in \eqref{eq2.0} separately.

\begin{lemma}\label{lm2-1}
It is true that
\[\sum_{i,j=1}^{k} \int_{\r^{N}} \bigl(V(x) - 1\bigr)  U_{\varepsilon, \xi_i}U_{\varepsilon, \xi_j} \,dx = \frac{ak\beta}{r^{m}} \Bigl(1  +O(r^{-\delta}) \Bigr),   \]
where $\beta = \int_{\r^{N}} U_{\varepsilon}^{2} \, dx $.
\end{lemma}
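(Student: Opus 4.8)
The plan is to split the double sum into its diagonal part ($i=j$) and its off-diagonal part ($i\neq j$), to show that the diagonal produces the claimed main term $\frac{ak\beta}{r^m}$, and to show that the off-diagonal part is genuinely of smaller order, so that it is absorbed into the relative error $O(r^{-\delta})$.

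\emph{Diagonal part.} By the rotational symmetry built into the configuration $\{\xi_j\}$, all $k$ diagonal integrals coincide, so that $\sum_{j=1}^k\int_{\r^N}(V(x)-1)U_{\varepsilon,\xi_j}^2\,dx = k\int_{\r^N}(V(x)-1)U_{\varepsilon,\xi_1}^2\,dx$. After the change of variables $y=x-\xi_1$ this becomes $k\int_{\r^N}(V(y+\xi_1)-1)U_\varepsilon(y)^2\,dy$. On the ball $|y|\le r/2$ I would invoke $(V_2^+)$ to write $V(y+\xi_1)-1=\frac{a}{|y+\xi_1|^m}+O(|y+\xi_1|^{-(m+\delta)})$, which is legitimate there since $|y+\xi_1|\ge r/2\to\infty$, and then expand $|y+\xi_1|^{-m}=r^{-m}\bigl(1+\tfrac{2y\cdot\xi_1+|y|^2}{r^2}\bigr)^{-m/2}=r^{-m}\bigl(1-\tfrac{m\,y\cdot\xi_1}{r^2}+O(|y|^2/r^2)\bigr)$. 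The linear term $y\cdot\xi_1$ is odd and integrates to zero against the radial weight $U_\varepsilon^2$, the quadratic correction contributes $O(r^{-2})$ relatively, and the remainder of the potential contributes $O(r^{-\delta})$ relatively; the leading term yields $\frac{a}{r^m}\int U_\varepsilon^2=\frac{a\beta}{r^m}$. The complementary region $|y|>r/2$ is controlled using $|V-1|\le C$ together with the exponential decay $U_\varepsilon(y)^2\lesssim e^{-2|y|/\varepsilon}$, giving a contribution $\lesssim e^{-r/\varepsilon}$, which is far smaller than $r^{-m-\delta}$. Hence the diagonal part equals $\frac{ak\beta}{r^m}\bigl(1+O(r^{-\delta})\bigr)$.

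\emph{Off-diagonal part.} For $i\neq j$ the product $U_{\varepsilon,\xi_i}U_{\varepsilon,\xi_j}$ has effective mass only near the two far-away peaks and their midpoint, all at distance $\sim r$ from the origin, so $|V(x)-1|\le Cr^{-m}$ on this effective support (the bounded contribution near the origin being exponentially small). Using the standard overlap estimate $\int_{\r^N}U_{\varepsilon,\xi_i}U_{\varepsilon,\xi_j}\,dx\le C\,U_\varepsilon(|\xi_i-\xi_j|)$, with $U_\varepsilon(d)\sim (d/\varepsilon)^{-(N-1)/2}e^{-d/\varepsilon}$ and $|\xi_i-\xi_j|=2r\sin\frac{|i-j|\pi}{k}$, I would bound $\bigl|\sum_{i\neq j}\int(V-1)U_{\varepsilon,\xi_i}U_{\varepsilon,\xi_j}\bigr|\le Ck\,r^{-m}\sum_{l=1}^{k-1}U_\varepsilon\bigl(2r\sin\tfrac{l\pi}{k}\bigr)$. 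For $r\in S_k$ the nearest-neighbour separation satisfies $2r\sin\frac{\pi}{k}\approx\varepsilon m\ln k$, so that $U_\varepsilon\bigl(2r\sin\frac{\pi}{k}\bigr)\lesssim(\ln k)^{-(N-1)/2}k^{-m}$; larger $l$ give geometrically smaller terms, so the sum is dominated by $l=1$ and the whole off-diagonal part is $O\bigl(k^{1-m}r^{-m}(\ln k)^{-(N-1)/2}\bigr)$. Comparing with the diagonal main term $\sim k\,r^{-m}$, the ratio is $O\bigl(k^{-m}(\ln k)^{-(N-1)/2}\bigr)=o(r^{-\delta})$ because $m>1$, so the off-diagonal contribution is absorbed into the factor $1+O(r^{-\delta})$, which completes the proof.

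\emph{Main obstacle.} The delicate point is the summation in the off-diagonal estimate: since the number of peaks $k\to\infty$ and the radius $r=r_k$ itself depends on $k$, one must verify that summing the pairwise interactions over all $\binom{k}{2}$ pairs still produces a quantity of strictly smaller order than the diagonal term. This requires the sharp exponential decay of $U_\varepsilon$ together with the precise relation $2r\sin\frac{\pi}{k}\approx\varepsilon m\ln k$ forced by $r\in S_k$; it is exactly the balance between the potential term and the inter-peak interaction that the admissible range $S_k$ is designed to realize.
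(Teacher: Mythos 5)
Your proposal is correct and follows essentially the same route as the paper: split into the diagonal part, which after translation and the expansion $|x+\xi_j|^{-m}=|\xi_j|^{-m}(1+O(|x|/|\xi_j|))$ on $B_{r/2}(0)$ yields the main term $\frac{ak\beta}{r^m}$, and the off-diagonal part, which is shown to be of order $kr^{-m}k^{-c}$ for some $c>0$ and hence absorbed into the relative error. The only (cosmetic) difference is that the paper controls the off-diagonal sum by the pointwise sector bound $\sum_{j\ge2}U_{\varepsilon,\xi_j}\le Ce^{-(1-\sigma)|x-\xi_1|/\varepsilon}e^{-\sigma\pi r/(k\varepsilon)}$ on $\Omega_1$ rather than by summing pairwise $L^1$ overlap integrals, but both hinge on the same geometric summation and on $2r\sin\frac{\pi}{k}\approx\varepsilon m\ln k$ forced by $r\in S_k$.
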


\begin{proof} Firstly, let us consider the case $i=j$.  Since \[V(|x|) = 1 + \frac{a}{|x|^{m}} + O(\frac{1}{|x|^{m+\delta}}), \, \, |x| \to \infty,\]
we have, for small $\theta> 0$,
\begin{equation}\label{eq2.1}\begin{split}
\int_{\r^{N}}  \bigl(V(x) - 1\bigr)  U^{2}_{\varepsilon, \xi_{j}} \, dx = & \int_{\r^{N}} \bigl(V(|x + \xi_{j}|) - 1\bigr)  U_{\varepsilon}^{2} \, dx \\
= & \int_{B_{\frac r2}(0)} \frac{a}{|x + \xi_{j}|^{m}} U_{\varepsilon}^{2} \, dx  + O\Bigl(  \int_{B_{\frac r2}(0)} \frac{1}{|x + \xi_{j}|^{m+\delta}}  U_{\varepsilon}^{2} \, dx \Bigr)+ O\Bigl( e^{-\frac{(1-\theta)r}{\varepsilon}}\Bigr).
\end{split}
\end{equation}
On the other hand, for $x \in B_{\frac{r}{2}(0)}$, there holds
\[\frac{1}{|x + \xi_{j}|^{m}} = \frac{1}{|\xi_{j}|^{m}}  \Bigl(1 + O\big(\frac{|x|}{|\xi_{j}|}\big)\Bigr),\]
which implies
\begin{equation}\label{eq2.2}
\int_{B_{\frac r2}(0)}  \frac{a}{|x + \xi_{j}|^{m}} U_{\varepsilon}^{2} \, dx =  \frac{a}{|\xi_{j}|^{m}} \int_{\r^{N}}U_{\varepsilon}^{2}\, dx + O\Bigl( r^{-(m+1)} + e^{-\frac{(1-\theta)r}{\varepsilon}}\Bigr).
\end{equation}
Similarly,
\begin{equation}\label{eq2.3}
O\Bigl( \int_{B_{\frac r2}(0)}  \frac{1}{|x + \xi_{j}|^{m+\delta}} U_{\varepsilon}^{2} \, dx \Bigr) = O\Bigl(  \frac{1}{|\xi_{j}|^{m+\delta}} \Bigr) = O\Bigl(\frac{1}{r^{m+\delta}} \Bigr).
\end{equation}
Next, if  $i\neq j$, by  symmetry, we see that
\begin{equation}\label{eq2.4}
\begin{split}
& \sum_{i\neq j}\int_{\r^{N}}\bigl(V(x) - 1\bigr)  U_{\varepsilon, \xi_{i}} U_{\varepsilon, \xi_{j}} \,dx \\
= & 2k \sum_{i=1}^{k} \sum_{i<  j} \int_{\Omega_{1} \setminus B_{R_{0}}(0) }\bigl(V(x) - 1\bigr)  U_{\varepsilon, \xi_{i}} U_{\varepsilon, \xi_{j}} \,dx +  \sum_{i\neq j}\int_{B_{R_{0}}(0)}\bigl(V(x) - 1\bigr)  U_{\varepsilon, \xi_{i}} U_{\varepsilon, \xi_{j}} \,dx  \\
= & 2k \sum_{j=2}^{k} \int_{\Omega_{1}}\bigl(V(x) - 1\bigr)  U_{\varepsilon, \xi_{1}} U_{\varepsilon, \xi_{j}} \,dx + 2k \sum_{i=2}^{k}  \sum_{i<  j} \int_{\Omega_{1}}\bigl(V(x) - 1\bigr)  U_{\varepsilon, \xi_{i}} U_{\varepsilon, \xi_{j}}\,dx + O\Bigl(e^{-\frac{(1-\theta)r}{\varepsilon }}\Bigr).
\end{split}
\end{equation}
If $x \in \Omega_{1}$, we have $|x - \xi_{j}| \geq |x - \xi_{1}|$. For any $\sigma \in (0, 1)$,
\[
U_{\varepsilon, \xi_{j}} \leq   C e^{-\frac{|x- \xi_{j}|}{\varepsilon}} \leq C e^{-\sigma \frac{|x - \xi_{j}|}{\varepsilon}} e^{-(1- \sigma) \frac{|x - \xi_{1}|}{\varepsilon}}
\leq  C e^{-\frac{\sigma}{2} \frac{|\xi_{1} - \xi_{j}|}{\varepsilon}} e^{-(1- \sigma) \frac{|x - \xi_{1}|}{\varepsilon}}.
\]
Hence,
\begin{equation}\label{sum} \begin{split}
\sum_{j =2}^{k} U_{\varepsilon, \xi_{j}} \leq  & C  e^{-(1- \sigma) \frac{|x - \xi_{1}|}{\varepsilon}}   \sum_{j =2}^{k}  e^{-\frac{\sigma}{2} \frac{|\xi_{1} - \xi_{j}|}{\varepsilon}} \\
\leq & C \sum_{j=2}^{k} e^{-(1- \sigma) \frac{|x - \xi_{1}|}{\varepsilon}}   e^{-\frac{\sigma r \sin \frac{(j-1)\pi}{k}}{ \varepsilon }} \leq  C_{1}e^{-(1- \sigma) \frac{|x - \xi_{1}|}{\varepsilon}}    e^{-\frac{\sigma \pi r  }{ k\varepsilon }} .
\end{split} \end{equation}
Consequently,
\begin{equation}\label{eq2.5}
\begin{split}
\Bigl|2k \sum_{j=2}^{k} \int_{\Omega_{1}}\bigl(V(x) - 1\bigr)  U_{\varepsilon, \xi_{1}} U_{\varepsilon, \xi_{j}} \,dx \Bigr| \leq &  2k C   e^{-\frac{\sigma \pi r  }{ k\varepsilon }} \int_{\Omega_{1}} \bigl|V(x) - 1\bigr|
e^{-\frac{|x-\xi_{1}|}{\varepsilon}}  e^{-(1- \sigma) \frac{|x - \xi_{1}|}{\varepsilon}} \\
= & O\Bigl(k  e^{-\frac{\sigma \pi r  }{ k\varepsilon }} r^{-m}  \Bigr)
\end{split}
\end{equation}
and
\begin{equation}\label{eq2.6}
\begin{split}
\Bigl| 2k \sum_{i=2}^{k}  \sum_{i<  j} \int_{\Omega_{1}}\bigl(V(x) - 1\bigr)  U_{\varepsilon, \xi_{i}} U_{\varepsilon, \xi_{j}} \,dx\Bigr| \leq & 2k  e^{-\frac{2\sigma \pi r  }{ k\varepsilon }}   \int_{\Omega_{1}}  \bigl|V(x) - 1\bigr| e^{-2(1- \sigma) \frac{|x - \xi_{1}|}{\varepsilon}} \\
= O\Bigl(k  e^{-\frac{2\sigma \pi r   }{ k\varepsilon }} r^{-m}  \Bigr).
\end{split}
\end{equation}
The result follows from \eqref{eq2.1}-\eqref{eq2.6}.

\end{proof}

\begin{lemma}\label{lm2-2} For $k$ sufficiently large, there holds 
\[\begin{split} &  \int_{\r^{N}} \bigl[ |W_{\varepsilon,k}|^{p} - |u_{\varepsilon}|^{p} - \sum_{j=1}^{k} U_{\varepsilon, \xi_{j}}^{p} - p \sum_{j=1}^{k} u_{\varepsilon}^{p-1} U_{\varepsilon, \xi_{j}} - p \sum_{i<j}  U_{\varepsilon, \xi_{i}}^{p-1} U_{\varepsilon, \xi_{j}} \bigr] \, dx \\
=&     \gamma k ( r/k)^{-\frac{N-1}{2}} e^{-\frac{2\pi r}{\varepsilon k}} \Bigl(1 + O(r^{-1}) \Bigr),
\end{split} \]
where  $\gamma$ \ is a positive constant.
\end{lemma}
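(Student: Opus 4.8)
The plan is to localize the integral and to read off the leading interaction by a first--order Taylor expansion of $t\mapsto|t|^{p}$ around the dominant bump in each region. I would first split off the symmetric part
\[
\mathcal{J}:=\int_{\r^{N}}\Bigl[\,|W_{\varepsilon,k}|^{p}-u_{\varepsilon}^{p}-\sum_{j=1}^{k}U_{\varepsilon,\xi_{j}}^{p}-p\sum_{j=1}^{k}u_{\varepsilon}^{p-1}U_{\varepsilon,\xi_{j}}\,\Bigr]\,dx,
\]
so that the quantity in the statement equals $\mathcal{J}-p\sum_{i<j}\int_{\r^{N}}U_{\varepsilon,\xi_{i}}^{p-1}U_{\varepsilon,\xi_{j}}\,dx$. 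Decomposing $\r^{N}=B_{R_{0}}(0)\cup\bigcup_{j}\Omega_{j}$, on $B_{R_{0}}(0)$ the bump $u_{\varepsilon}$ dominates and the subtracted terms cancel the first--order expansion, leaving only a quadratic remainder of size $O(e^{-r/\varepsilon})$; on each sector $\Omega_{j}$ the bump $U_{\varepsilon,\xi_{j}}$ dominates, so that after the cancellation the surviving first--order contribution is $p\,U_{\varepsilon,\xi_{j}}^{p-1}\sum_{i\ne j}U_{\varepsilon,\xi_{i}}$ up to a negligible $u_{\varepsilon}$--interaction. Using the rotational and reflection symmetry of the configuration to replace $\sum_{j}\int_{\Omega_{j}}$ by $k\int_{\Omega_{1}}$, this gives $\mathcal{J}=2p\sum_{i<j}\int_{\r^{N}}U_{\varepsilon,\xi_{i}}^{p-1}U_{\varepsilon,\xi_{j}}\,dx\,(1+o(1))$, whence the full expression equals $p\sum_{i<j}\int_{\r^{N}}U_{\varepsilon,\xi_{i}}^{p-1}U_{\varepsilon,\xi_{j}}\,dx$ at leading order.

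It then remains to evaluate this sum. By symmetry it equals $\tfrac{pk}{2}\sum_{i=2}^{k}\int_{\r^{N}}U_{\varepsilon,\xi_{1}}^{p-1}U_{\varepsilon,\xi_{i}}\,dx$, and I would insert the two--bump asymptotics $\int_{\r^{N}}U_{\varepsilon}^{p-1}(x)U_{\varepsilon}(x-\zeta)\,dx=C(\varepsilon)\,|\zeta|^{-\frac{N-1}{2}}e^{-|\zeta|/\varepsilon}(1+o(1))$, which follows from the decay $U(r)\sim r^{-\frac{N-1}{2}}e^{-r}$ recalled in the introduction. Since $|\xi_{i}-\xi_{1}|=2r\sin\frac{(i-1)\pi}{k}$, the two nearest neighbours $i=2,k$ each contribute a term of order $(r/k)^{-\frac{N-1}{2}}e^{-2\pi r/(\varepsilon k)}$, while the remaining indices produce a geometrically decaying tail; summing and tracking the sub--leading corrections yields $\gamma k\,(r/k)^{-\frac{N-1}{2}}e^{-2\pi r/(\varepsilon k)}\bigl(1+O(r^{-1})\bigr)$ with $\gamma>0$.

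The \emph{main obstacle} is to show that every discarded term is of strictly lower order \emph{uniformly} as $k\to\infty$, since the number of interacting bumps grows while their separation $|\xi_{1}-\xi_{2}|\approx 2\pi r/k$ shrinks relative to $r$. The delicate point is the quadratic Taylor remainder $\int_{\Omega_{j}}U_{\varepsilon,\xi_{j}}^{p-2}\phi_{j}^{2}$ with $\phi_{j}=\sum_{i\ne j}U_{\varepsilon,\xi_{i}}$: a global bound for $\int_{\r^{N}}U_{\varepsilon,\xi_{j}}^{p-2}U_{\varepsilon,\xi_{j+1}}^{2}$ decays only like $e^{-(p-2)|\xi_{1}-\xi_{2}|/\varepsilon}$, which for $2<p<3$ is \emph{larger} than the leading interaction $e^{-|\xi_{1}-\xi_{2}|/\varepsilon}$. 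Here the restriction to the sector $\Omega_{j}$ is essential: on $\Omega_{j}$ one has $|x-\xi_{i}|\ge|x-\xi_{j}|$, so the exponent is minimised on the bisector and the remainder decays like $e^{-p|\xi_{1}-\xi_{2}|/(2\varepsilon)}$, which is strictly faster than the leading term because $p/2>1$. I would close the argument by combining this with the pointwise bound $\phi_{j}\le C\,U_{\varepsilon,\xi_{j}}$ on $\Omega_{j}$ (uniform in $k$) and the separation estimate $|\xi_{1}-\xi_{j}|\ge c\,\frac{r}{k}(j-1)$ for $1\le j\le k/2$, which makes the interaction tail a convergent geometric series with ratio $o(1)$; thus the sum is dominated by the nearest neighbours and the constant $\gamma$ is well defined for $r\in S_{k}$.
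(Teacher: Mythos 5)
Your proposal follows essentially the same route as the paper: Taylor-expand around the dominant bump in each region, identify the surviving leading term as $p\sum_{i<j}\int U_{\varepsilon,\xi_i}^{p-1}U_{\varepsilon,\xi_j}\,dx$, control the quadratic remainder for $2<p<3$ by a $U_{\varepsilon,\xi_i}^{p/2}U_{\varepsilon,\xi_j}^{p/2}$-type splitting (whose exponential rate $p/2>1$ beats the leading interaction), and evaluate the pairwise sum by symmetry plus the two-bump asymptotics, with the nearest neighbours dominating a summable tail. One caution: the pointwise bound $\phi_j\le C\,U_{\varepsilon,\xi_j}$ on $\Omega_j$ \emph{uniformly in $k$} is not literally true far from $\xi_j$, where $O(k)$ neighbouring bumps are each comparable to $U_{\varepsilon,\xi_j}$ and their sum exceeds any fixed multiple of it; the paper instead uses the refined estimate $\sum_{i\ne j}U_{\varepsilon,\xi_i}\le C e^{-(1-\sigma)|x-\xi_j|/\varepsilon}\,e^{-\sigma\pi r/(k\varepsilon)}$ on $\Omega_j$, which is what actually makes the tail negligible, but this repair does not change your conclusion.
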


\begin{proof} If $p \in (2, 3)$, we have
\begin{equation}\label{eq2.7}
\begin{split}
 & \int_{\r^{N}} \bigl[ |W_{\varepsilon,k}|^{p} - |u_{\varepsilon}|^{p} - \sum_{j=1}^{k} U_{\varepsilon, \xi_{j}}^{p} - p \sum_{j=1}^{k} u_{\varepsilon}^{p-1} U_{\varepsilon, \xi_{j}} - p \sum_{i<j}  U^{p-1}_{\varepsilon, \xi_{i}} U_{\varepsilon, \xi_{j}} \bigr] \, dx \\
 = & p   \sum_{i<j}  \int_{\r^{N}}  U^{p-1}_{\varepsilon, \xi_{i}} U_{\varepsilon, \xi_{j}} \, dx  + O\Bigl( \sum_{j=1}^{k} \int_{\r^{N}}  u_{\varepsilon}^{\frac{p}{2}} U_{\varepsilon, \xi_{j}}^{\frac{p}{2}} \, dx + \sum_{i< j} \int_{\r^{N}} U_{\varepsilon, \xi_{i}}^{\frac{p}{2}} U_{\varepsilon, \xi_{j}}^{\frac{p}{2}} \, dx   \Bigr).
 \end{split}
\end{equation}
We infer straightforwardly that
\begin{equation}\label{eq2.8}\begin{split}
 \sum_{i<j}  \int_{\r^{N}}  U^{p-1}_{\varepsilon, \xi_{i}} U_{\varepsilon, \xi_{j}} \, dx   = &\frac{ k}{2}  \sum_{j =2}^{k} \int_{\r^{N}} U^{p-1}_{\varepsilon, \xi_{1}} U_{\varepsilon, \xi_{j}} \, dx \\
= &  \frac{B_{1}k}{2}  \sum_{j =2}^{k} |\xi_{j} - \xi_{1}|^{-\frac{N-1}{2}} e^{-\frac{|\xi_{j}- \xi_{1}|}{\varepsilon}}  \Bigl(1 + O(r^{-1}) \Bigr)\\
=  &  \frac{B_{1}k}{2} |\xi_{2} - \xi_{1}|^{-\frac{N-1}{2}} e^{-|\xi_{2} -\xi_{1}|} \sum_{j=2}^{k} e^{-|\xi_{j}-\xi_{1}| + |\xi_{2} -\xi_{1}|} \frac{|\xi_{2}- \xi_{1}|^{\frac{N-2}{2}}}{|\xi_{j} -\xi_{1}|^{\frac{N-2}{2}}}\Bigl(1 + O(r^{-1}) \Bigr) \\
= &\gamma_{0} k \bigl(r \sin\frac{\pi}{k} \bigr)^{-\frac{N-1}{2}} e^{-\frac{2r\sin \frac{\pi}{k} }{\varepsilon}}\Bigl(1 + O(r^{-1}) \Bigr)\\
= & \gamma k ( r/k)^{-\frac{N-1}{2}} e^{-\frac{2\pi r}{\varepsilon k}}\Bigl(1 + O(r^{-1}) \Bigr) .
\end{split}
\end{equation}
The inequality $u_{\varepsilon}(x) \leq C e^{-\frac{(1-\delta)\sqrt{V(x_{0})} |x - x_{0}|}{\varepsilon}}$ with  $\delta > 0$ small yields that
\begin{equation}\label{eq2.9}
\begin{split}
\sum_{j=1}^{k} \int_{\r^{N}}  u_{\varepsilon}^{\frac{p}{2}} U_{\varepsilon, \xi_{j}}^{\frac{p}{2}} \, dx
= & O\Bigl(k \int_{\r^{N}} e^{-\frac{(1-\delta)p\sqrt{V(x_{0})}|x - x_{0}|}{2\varepsilon}} e^{- \frac{(1-\delta)p|x-\xi_{1}|}{2\varepsilon}}\Bigr)\\
=&  O\Bigl(k e^{-\frac{(1-\delta)p \min\{ \sqrt{V(x_{0})},1 \}|\xi_{1} - x_{0}|}{2\varepsilon}} \Bigr)
 \end{split}
\end{equation}
and
\begin{equation}\label{eq2.10}
\sum_{i< j} \int_{\r^{N}} U_{\varepsilon, \xi_{i}}^{\frac{p}{2}} U_{\varepsilon, \xi_{j}}^{\frac{p}{2}} \, dx  = O\Bigl(k \sum_{j=2}^{k} \int_{\Omega_{1}} U_{\varepsilon, \xi_{1}}^{\frac{p}{2}} U_{\varepsilon, \xi_{j}}^{\frac{p}{2}} \, dx \Bigr) =  O\Bigl(k e^{-\frac{(1-\delta)pr\sin \frac{\pi}{k}}{2\varepsilon}} \Bigr).
\end{equation}

If  $p \in [3, \frac{2N}{N-2} )$, we have
\begin{equation}\label{eq2.}
\begin{split}
 & \int_{\r^{N}} \bigl[ |W_{\varepsilon,k}|^{p} - |u_{\varepsilon}|^{p} - \sum_{j=1}^{k} U_{\varepsilon, \xi_{j}}^{p} - p \sum_{j=1}^{k} u_{\varepsilon}^{p-1} U_{\varepsilon, \xi_{j}} - p \sum_{i<j}  U^{p-1}_{\varepsilon, \xi_{i}} U_{\varepsilon, \xi_{j}} \bigr] \, dx \\
 = & p   \sum_{i<j}  \int_{\r^{N}}  U^{p-1}_{\varepsilon, \xi_{i}} U_{\varepsilon, \xi_{j}} \, dx  + O\Bigl( \sum_{j=1}^{k} \int_{\r^{N}}  u_{\varepsilon}^{p-2} U_{\varepsilon, \xi_{j}}^{2} \, dx + \sum_{i< j} \int_{\r^{N}} U_{\varepsilon, \xi_{i}}^{p-2} U_{\varepsilon, \xi_{j}}^{2} \, dx   \Bigr)\\
 =  &\gamma_{0} k\bigl(r \sin\frac{\pi}{k} \bigr)^{-\frac{N-1}{2}} e^{-\frac{2r\sin \frac{\pi}{k} }{\varepsilon}}  \Bigl(1 + O(r^{-1}) \Bigr)\\
 = &\gamma k ( r/k)^{-\frac{N-1}{2}} e^{-\frac{2\pi r}{\varepsilon k}}  \Bigl(1 + O(r^{-1}) \Bigr)  .
 \end{split}
\end{equation}

\end{proof}

\bigskip

\begin{proposition}\label{pp2} For $r \in S_{k}$, one has
\[\begin{split} I(W_{\varepsilon, k})
= & I(u_{\varepsilon}) + k  \alpha + \frac{ak\beta}{r^{m}} \Bigl(1  +O(r^{-\delta}) \Bigr) -  \frac{\gamma  k}{4} ( r/k)^{-\frac{N-1}{2}} e^{-\frac{2\pi r}{\varepsilon k}}  \Bigl(1 + O(r^{-1}) \Bigr).   \end{split} \]
\end{proposition}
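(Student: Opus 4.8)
The plan is to prove Proposition~\ref{pp2} by inserting the two preceding lemmas into the expansion \eqref{eq2.0}, which has already been established. Recall that \eqref{eq2.0} is obtained by expanding $I(W_{\varepsilon,k})$ with $W_{\varepsilon,k}=u_\varepsilon+\sum_{j=1}^k U_{\varepsilon,\xi_j}$ and using that $u_\varepsilon$ solves $-\varepsilon^2\Delta u_\varepsilon+V(x)u_\varepsilon=u_\varepsilon^{p-1}$ while each $U_{\varepsilon,\xi_j}$ solves $-\varepsilon^2\Delta U+U=U^{p-1}$. Testing these equations against the other bumps turns the quadratic cross terms into the self-energies $I(u_\varepsilon)+k\alpha$ and into the interaction pieces $\sum_j\int u_\varepsilon^{p-1}U_{\varepsilon,\xi_j}\,dx$ and $\sum_{i<j}\int U_{\varepsilon,\xi_i}^{p-1}U_{\varepsilon,\xi_j}\,dx$, which are absorbed into the bracket estimated by Lemma~\ref{lm2-2}. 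Thus only two genuinely nontrivial quantities survive in \eqref{eq2.0}: the quadratic potential sum $\tfrac12\sum_{i,j=1}^k\int_{\r^N}(V(x)-1)U_{\varepsilon,\xi_i}U_{\varepsilon,\xi_j}\,dx$ and the nonlinear remainder.

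First I would insert Lemma~\ref{lm2-1} into the quadratic potential term of \eqref{eq2.0}, which produces a contribution of the form $\tfrac{ak\beta}{r^m}\bigl(1+O(r^{-\delta})\bigr)$, with $\beta=\int_{\r^N}U_\varepsilon^2\,dx>0$. Next I would insert Lemma~\ref{lm2-2} for the bracket $\int_{\r^N}\bigl[|W_{\varepsilon,k}|^p-|u_\varepsilon|^p-\sum_j U_{\varepsilon,\xi_j}^p-p\sum_j u_\varepsilon^{p-1}U_{\varepsilon,\xi_j}-p\sum_{i<j}U_{\varepsilon,\xi_i}^{p-1}U_{\varepsilon,\xi_j}\bigr]\,dx$, which equals $\gamma k(r/k)^{-\frac{N-1}{2}}e^{-\frac{2\pi r}{\varepsilon k}}\bigl(1+O(r^{-1})\bigr)$; multiplying by $-\tfrac1p$ and renaming the (still positive) constant yields the negative interaction term $-\tfrac{\gamma k}{4}(r/k)^{-\frac{N-1}{2}}e^{-\frac{2\pi r}{\varepsilon k}}\bigl(1+O(r^{-1})\bigr)$. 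Collecting $I(u_\varepsilon)+k\alpha$ together with these two terms gives exactly the claimed formula.

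The remaining point, and really the only thing to verify for the proposition itself, is that the two leading interaction terms are of comparable size on $S_k$, so that neither is swallowed by the other's relative error and the asymptotics are meaningful. For $r\in S_k$ one has $r\sim\tfrac{\varepsilon m}{2\pi}k\ln k$, whence $\tfrac{k}{r^m}\sim k^{1-m}(\ln k)^{-m}$ while $k(r/k)^{-\frac{N-1}{2}}e^{-\frac{2\pi r}{\varepsilon k}}\sim k^{1-m}(\ln k)^{-\frac{N-1}{2}}$; both are of order $k^{1-m}$ up to logarithmic factors, so the potential and interaction terms balance, with opposite signs under $(V_2^+)$ since $a>0$. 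I expect no serious obstacle at this stage: the delicate work has already been carried out inside Lemmas~\ref{lm2-1} and \ref{lm2-2}, namely the reduction of the geometric sum $\sum_{j\ge2}|\xi_j-\xi_1|^{-\frac{N-1}{2}}e^{-|\xi_j-\xi_1|/\varepsilon}$ to its nearest-neighbour term and the exponentially small control of both the off-diagonal potential contributions and the $u_\varepsilon$–$U_{\varepsilon,\xi_j}$ cross interactions. The proof of Proposition~\ref{pp2} is then just the assembly of these estimates.
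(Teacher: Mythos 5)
Your proposal matches the paper's own proof, which is exactly the one-line assembly of \eqref{eq2.0} with Lemmas~\ref{lm2-1} and \ref{lm2-2} (the paper likewise silently absorbs the factors $\tfrac12$ and $\tfrac1p$ into the renamed constants $\beta$ and $\gamma$). Your added check that the two leading terms are of comparable order $k^{1-m}$ on $S_k$ is not in the paper's proof of this proposition but is consistent with how the balance is exploited later in the proof of Theorem~\ref{thm1}.
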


\begin{proof}
The result follows directly from \eqref{eq2.0} and Lemmas~\ref{lm2-1}-\ref{lm2-2}.

\end{proof}

\begin{proof}[ {\bf Proof of Theorem \ref{thm1}}]

To demonstrate that $u_{k} = W_{\varepsilon, k} +\omega_{k}$ is the solution of the equation \eqref{main}, it suffices to verify that $a_{k} = 0$ in \eqref{eq:1-4-40}, which is equivalent to finding a critical point of $F(r) :=I(W_{\varepsilon, k} + \omega_{k})$.

For $r \in S_{k}$, by Lemmas \ref{lm:2-2} and \ref{lm:2-3}, we get
\begin{equation}\label{eq:m1}\begin{split}
F(r) = I(W_{\varepsilon, k} + \omega_{k})= &  I(W_{\varepsilon, k}) + \langle l
_{k}, \omega_{k}\rangle +O\Bigl( \|\omega_{k}\|^{2}\Bigr) \\
= &  I(u_{\varepsilon}) + k \Bigl\{  \alpha + \frac{a\beta}{r^{m}} \Bigl(1  +O(r^{-\delta}) \Bigr) -  \frac{\gamma  }{4} (r/k)^{-\frac{N-1}{2}} e^{-\frac{2 \pi r }{\varepsilon k}}  \Bigl(1 + O(r^{-1}) \Bigr) \Bigr\}.
\end{split}
\end{equation}
Define
\[F_{1}(r) =\frac{a \beta}{r^{m}} \Bigl(1  +O(r^{-\delta}) \Bigr) -  \frac{\gamma }{4} (r/k)^{-\frac{N-1}{2}} e^{-\frac{2 \pi r }{\varepsilon k}}  \Bigl(1 + O(r^{-1}) \Bigr). \]
Consider the following maximization problem
\[\max_{r\in S_{k}} F_{1}(r).\]
 Suppose that $r_{0}$ is a maximizer of $F_{1}(r)$, we will show that $r_{0}$ is an interior point of $S_{k}$.

 Define the function
 \[F_{11}(r) =\frac{a\beta}{r^{m}} - \frac{\gamma }{4} (r/k)^{-\frac{N-1}{2}} e^{-\frac{2 \pi r }{\varepsilon k}}  \]
 and it can be verified that $F_{11}(r)$
 has a maximum point
 \begin{equation}\label{r} \bar r =\bigl[ \frac{\varepsilon m}{2\pi} + o(1)\bigr] k\ln k \in S_{k},\end{equation}
 which satisfies
\[\frac{ma \beta}{{\bar r}^{m}} = \frac{\gamma }{4} (\bar r/k)^{-\frac{N-1}{2}}e^{-\frac{2\pi {\bar r} }{\varepsilon k}} \bigl[ \frac{N-1}{2} + \frac{2\pi {\bar r}}{\varepsilon k}\bigr].\]
 Moreover,
 \begin{equation}\label{f1}\begin{split} F_{1}(\bar r) =&  \frac{a \beta}{\bar r^{m}} \Bigl(1  +O(\bar r^{-\delta}) \Bigr) -  \frac{\gamma }{4} (\bar r/k)^{-\frac{N-1}{2}} e^{-\frac{2 \pi \bar r }{\varepsilon k}}  \Bigl(1 + O(\bar r^{-1}) \Bigr) \\
 = & \frac{\gamma }{4} (\bar r/k)^{-\frac{N-1}{2}} e^{-\frac{2 \pi \bar r }{\varepsilon k}} \Bigl[\frac{N-1}{2m} + \frac{2\pi \bar r}{m \varepsilon k}- 1\Bigr] + O\bigl(\bar r^{-(m +\delta)}\bigr)> 0.
 \end{split}\end{equation}
Taking $r=  [\frac{\varepsilon m}{2\pi} - \theta ] k\ln k$, one has, for $k$ large sufficiently,
  \begin{equation}\label{eq:m-3}\begin{split}
 & F_{1}\Bigl( [\frac{\varepsilon m}{2\pi} - \theta ] k\ln k\Bigr) \\
 = & \frac{a \beta}{\{ [\frac{\varepsilon m}{2\pi} - \theta ] k\ln k\}^{m}} \Bigl\{1 - \frac{\gamma }{4a \beta} \{[\frac{\varepsilon m}{2\pi} - \theta ] \ln k\}^{m -\frac{N-1}{2}} \cdot k^{  \frac{2\pi \theta}{\varepsilon}} \Bigr\} \cdot \Bigl( 1 + O(k^{-\delta})\Bigr)\\
  < &  0 < F_{1}(\bar r).
 \end{split}
 \end{equation}
 On the other hand,  by \eqref{r} and for $\theta_{1} > 0$ small, we have that
 \begin{equation}\label{eq:m-2}\begin{split}
 & F_{1}\Bigl([\frac{\varepsilon m}{2\pi} + \theta ] k\ln k\Bigr) \\
 = & \frac{a\beta}{\{ [\frac{\varepsilon m}{2\pi} + \theta ] k \ln k\}^{m}} \Bigl\{1 - \frac{\gamma }{4a \beta} \{[\frac{\varepsilon m}{2\pi} + \theta ]  \ln k \}^{m -\frac{N-1}{2}} \cdot k^{ - \frac{2\pi \theta}{\varepsilon}} \Bigr\} \cdot \Bigl( 1 + O(k^{-\delta})\Bigr)\\
 < &  \frac{a\beta}{\bar r^{m}} \Bigl\{1 - \frac{\gamma }{4a \beta} k^{- \theta_{1}} \Bigr\} \cdot \Bigl( 1 + O(k^{-\delta})\Bigr)\\
 < & \frac{\gamma }{4} \bar r^{-\frac{N-1}{2}} e^{-\frac{2 \pi \bar r }{\varepsilon k}} \Bigl[\frac{N-1}{2m} + \frac{2\pi \bar r}{m \varepsilon k}- 1\Bigr] + O\bigl(\bar r^{-(m +\delta)}\bigr) = F_{1}(\bar r).
 \end{split}
 \end{equation}
Hence, there exists an interior maximum point $r_{0}$ of $F_{1}$, that is, $W_{\varepsilon, k} + \omega_{k}$ is a solution of problem \eqref{main}. The proof is completed.

\end{proof}

\bigskip

Finally, we will give a sketch of the proof for Theorem \ref{thm2} since the approach is similar to that of  Theorem \ref{thm1}.

Set
\[\tilde W_{\varepsilon, k} = u_{\varepsilon} + \sum_{j=1}^{2k} (-1)^{j} U_{\varepsilon, \tilde \xi_{j}}. \]
Our goal is to look for a solution for \eqref{main} of the form
\[\tilde u_{\varepsilon, k} = \tilde{W}_{\varepsilon, k} + \tilde{\omega}_{\varepsilon, k},\]
where \[\tilde{\omega}_{\varepsilon, k} \in \tilde{E}_{k} := \Bigl\{ \omega \in \tilde{H}_{s}(\r^{N}): \sum_{j=1}^{2k} \int_{\r^{N}}  U^{p-2}_{\varepsilon, \tilde{\xi}_{j}} Z_{j} \varphi \, dx = 0\Bigr\} \]
and $\tilde{H}_{s}(\r^{N})$ is given by
\[\begin{split}\tilde{H}_{s}(\r^{N}) : =  \Bigl\{ u \in H^{1}(\r^{N}):  & u\ \mbox{is even in}\ x_i, i=2,\cdots,N \\
& u(r\cos\theta, r\sin\theta,x'')=
(-1)^{j}u\Bigl(r\cos(\theta+\frac{j\pi}{k}),r(\theta+\sin\frac{j\pi}{k}),x''\Bigr)\ \Bigr\}.
\end{split} \]

If $V(x)$ satisfies $(V_1)$ and $(V_2^{-})$, the finite-dimensional reduction result in Section~2 is still true.  Consequently, there exists $\tilde \omega_{\varepsilon, k} \in \tilde{E}_{k}$ such that, for any $r \in \tilde{S}_{k}$, $\tilde u_{\varepsilon, k}= \tilde{W}_{\varepsilon, k} + \tilde \omega_{\varepsilon, k}$ is a solution of
\[ - \Delta u + V(x)u = |u|^{p-2}u + \sum_{j=1}^{2k} U^{p-2}_{\varepsilon, \tilde{\xi}_{j}} Z_{j}, \text{ in } \r^{N}. \]
Moreover,
\[ \|\tilde \omega_{\varepsilon, k}\|  =  O\Bigl(  \frac{ k}{r^{m}}  \Bigr) + O \Bigl(k e^{-\frac{\tau r}{\varepsilon}} \Bigr) +  O \Bigl(k^{\frac{1}{2}} \Bigl( (r/k)^{-\frac{(N-1)}{2}} e^{-\frac{ \pi r}{\varepsilon k }} \Bigr)^{ \min\{\frac{p}{2} -\tau, 1\}}\Bigr). \]
In order to show that $\tilde u_{\varepsilon, k}$ is a solution of \eqref{main}, it is equivalent to proving that there exists a minimum point $\tilde{r}_{k} \in \tilde{S}_{k}$ of the following function
\[\tilde{F}(r) = I(\tilde{W}_{\varepsilon, k} + \tilde{\omega}_{\varepsilon, k}). \]
As the proof of Proposition \ref{pp2}, we have the following estimate
\begin{equation}\label{p2}\begin{split}
I(\tilde{W}_{\varepsilon, k} + \tilde{\omega}_{\varepsilon, k}) = &  I(u_{\varepsilon}) +2 k  \alpha - \frac{2 ak\beta}{r^{m}} \Bigl(1  +O( r^{-\delta}) \Bigr) +  \frac{\gamma  k}{2} (r/k)^{-\frac{N-1}{2}} e^{-\frac{\pi r}{\varepsilon k}}  \Bigl(1 + O(r^{-1}) \Bigr).
\end{split}
\end{equation}
To find the critical point $\tilde{r}_{k} \in \tilde{S}_{k}$ of the function
\[ \tilde{F}(r ) = I(u_{\varepsilon}) +2 k  \alpha - \frac{2 ak\beta}{r^{m}} \Bigl(1  +O(r^{-\delta}) \Bigr) +  \frac{\gamma  k}{2} (r/k)^{-\frac{N-1}{2}} e^{-\frac{\pi r}{\varepsilon k}}  \Bigl(1 + O(r^{-1}) \Bigr),  \]
we need only to verify that there is a minimum point $\tilde{r}_{k} \in \tilde{S}_{k}$ of
\[\tilde{F}_{1}(r) = - \frac{2 a \beta}{r^{m}} \Bigl(1  +O(r^{-\delta}) \Bigr) +  \frac{\gamma  }{2}  (r/k)^{-\frac{N-1}{2}} e^{-\frac{\pi r}{\varepsilon k}}  \Bigl(1 + O(r^{-1}) \Bigr).  \]
The result can be deduced as the proof of Theorem \ref{thm1}.
\bigskip

{\bf Statements and Declarations}  { There is no conflict of interest for all authors.}


\end{document}